\newcommand{\kk}             {{\mathbb K}}
\newcommand{\rr}             {{\mathbb R}}
\newcommand{\into}          {\rightarrow}
\newcommand{\tens}         {\otimes}
\newcommand{\fhi}            {\varphi}
\newcommand{\so}            {\Sigma\mbox{-ope\-ra\-tor}}
\newcommand{\sumim}      {\sum\limits_{i=1}^m}
\newcommand{\lev}            {\left\langle}
\newcommand{\rev}            {\right\rangle}
\newcommand{\br}    		    {{\beta|}}
\newcommand{\apq}  	    {\alpha_{p,q}}
\newcommand{\apqb}  	    {\alpha_{p,q}^\beta}
\newcommand{\xxx}           {X_1,\dots, X_n}
\newcommand{\zzz}            {Z_1,\dots, Z_n}
\newcommand{\eee}          {E_1,\dots, E_n}
\newcommand{\xpx}           {X_1\times\dots\times X_n}
\newcommand{\epe}           {E_1\times\dots\times E_n}
\newcommand{\zpz}           {Z_1\times\dots\times Z_n}
\newcommand{\xxp}           {(x^1,\dots ,x^n)}
\newcommand{\sxx}           {\Sigma_{X_1\dots X_n}}
\newcommand{\sxxb}         {\Sigma_{X_1 \dots X_n}^\beta}
\newcommand{\szzt}           {\Sigma_{Z_1 \dots Z_n}^\theta}
\newcommand{\szz}            {\Sigma_{Z_1\dots Z_n}}
\newcommand{\sxxp}         {\Sigma_{X_1 \dots X_n}^\pi}
\newcommand{\xtx}      		{X_1\tens\dots\tens X_n}
\newcommand{\ztz}      		{Z_1\tens\dots\tens Z_n}
\newcommand{\ete}      		{E_1\tens\dots\tens E_n}
\newcommand{\xtxb}    		{\left(\xtx,\beta\right)}
\newcommand{\ztzt}    		{\left(\ztz,\theta\right)}
\newcommand{\xtxty}    	  	{X_1\tens\dots\tens X_n\tens Y}
\newcommand{\etetf}    	  	{E_1\tens\dots\tens E_n\tens F}
\newcommand{\ztztw}    	{Z_1\tens\dots \tens Z_n\tens W}
\newcommand{\xtxtyd}   	{X_1\tens\dots\tens X_n\tens Y^*}
\newcommand{\xtxp}         {X_1\hat{\tens}_\pi\dots\hat{\tens}_\pi X_n}
\newcommand{\xxt}           {x^1\tens\dots\tens  x^n}
\newcommand{\zzt}            {z^1\tens\dots\tens z^n}
\newcommand{\pimqi}       {p_i-q_i}
\newcommand{\rlu}            {\sumim \lambda_i(\pimqi)\tens y_i}
\newcommand{\Txxy}         {T:\xpx\into Y}
\newcommand{\tlin}           {\widetilde{T}}
\newcommand{\rlin}           {\widetilde{R}}
\newcommand{\Lxx}           {\mathcal{L}(\xxx)}
\newcommand{\Lxxy}         {\mathcal{L}(\xxx;Y)}
\newcommand{\Lbxx}         {\mathcal{L}^\beta\left(\xxx\right)}
\newcommand{\Dpqbxxy}   {\mathcal{D}_{p,q}^\beta(\xxx; Y)}
\newcommand{\Lb}            {Lip^\beta}
\newtheorem{theorem}{Theorem}[section]
\newtheorem{lemma}[theorem]{Lemma}
\newtheorem{proposition}[theorem]{Proposition}
\newtheorem{corollary}[theorem]{Corollary}
\theoremstyle{definition}
\newtheorem{definition}[theorem]{Definition}
\theoremstyle{remark}
\newtheorem{remark}[theorem]{Remark}
\numberwithin{equation}{section}
\begin{document}

\title{$(p,q)$-Dominated Multilinear Operators and Laprest\'e tensor norms}
\author{Maite Fern\'andez-Unzueta, Samuel Garc\'ia-Hern\'andez}
\address{Centro de Investigaci\'{o}n en Matem\'{a}ticas, P.O. Box 402, Jalisco S/N Mineral de Valenciana, Guanajuato, M\'{e}xico}
\email{maite@cimat.mx; orcid:{0000-0002-8321-4877}} 
\email{samuelg@cimat.mx; orcid:{0000-0003-4562-299X}}

\subjclass[2010]{Primary 47H60; Secondary 47B10, 46G25, 47L22, 46M05.}






\keywords{Dominated operators, multilinear and polynomial mappings, tensor products, ideals of multilinear mappings}

\begin{abstract}
We introduce a notion of $(p,q)$-dominated multilinear operators which stems from the  geometrical approach provided by  $\Sigma$-operators. We prove that  $(p,q)$-dominated multilinear operators can be characterized in terms of their  behavior on finite sequences and in terms of their  relation with a Laprest\'e tensor norm.  We also prove that they verify a generalization of the Pietsch's Domination Theorem and Kwapie\'n's Factorization Theorem. Also, we  study the collection $\mathcal{D}_{p,q}$ of all $(p,q)$-dominated multilinear operators  showing that $\mathcal{D}_{p,q}$ has a maximal ideal demeanor and that the Laprest\'e norm has a finitely generated behavior.
\end{abstract}
\maketitle

\section{Introduction}

In recent decades, a lot of research has been focused on extending the theory of linear operators to the multilinear setting. This has been the case of compact, nuclear, integral and absolutely $p$-summing operators, among others, see for instance \cite{alencar85, angulo18, dimant03, fernandez-unzueta18b, krikorian72, lopez-molina07, pellegrino14, pellegrino16, pietsch83, villanueva03}. For the case  of dominated operators,  we find the notions of  $(r_1,\dots, r_n)$-dominated multilinear operators, treated by several authors in \cite{carando06, carando07, jarchow07, matos93, melendez99, pellegrino14, pellegrino11, perez-garcia05a, popa14c}, the class of  $(r_0, r_1\dots, r_{n+1})$-dominated multilinear operators \cite{lopez-molina12}, the dominated $n$-linear operators respect to a system of $n$ linear operators \cite{popa17} and the dominated $(p_1,\dots , p_m;\sigma)$-continuous multilinear operators \cite{achour16}.

A significant feature of the class  of $(p,q)$-dominated linear operators  is that it encompasses  two  of the most important  classes  of linear operators, namely the  absolutely $p$-summing operators, (case $q=\infty$) and the   $p$-dominated operators (case $q=p^*$).
 It constitutes a  maximal ideal, which is linked with the Laprest\'e tensor norms  through a duality relation. The  name  $(p,q)$-dominated is  due to the fact that  they can be characterized in terms of  a  domination inequality  which is a   generalization of   the Pietsch domination inequality of the absolutely $p$-summing operators.  Another key feature of these operators is that they verify the   so called  Kwapie\'n's Factorization Theorem.  Excellent expositions of this class can be found in \cite[Section 17.4]{pietsch78} and \cite[Section 19]{defant93}.

The goal of this paper is to introduce and develop  a   notion of $(p,q)$-domination for  multilinear mappings $\Txxy$ such that for it,  the key  features of  $(p,q)$-dominated linear operators mentioned above remain true also in the more general setting of multilinear operators. This notion stems from regarding $T$ as a homogeneous mapping on the Segre cone $\sxx$ (a certain subset of the tensor product $\xtx$).

The results obtained here are consistent with those obtained in \cite{angulo18}, where the same geometrical generalization procedure of \cite{fernandez-unzueta18a} was applied. Concretely, $(p,q)$-dominated multilinear operators are closely related with the so called Lipschitz $p$-summing multilinear operators (see (iii) of Theorem \ref{factorization}).

It is important to note that to have a complete description of the multilinear case,  it is necessary to deal simultaneously with all reasonable crossnorms on $\xtx$.

To summarize the content, in the case of the projective tensor norm $\pi$, consider $1\leq p,q\leq \infty$ with the property $\frac{1}{p}+\frac{1}{q}\leq 1$ and let $\Txxy$ be a bounded multilinear operator between Banach spaces such that there exists a constant $C$ verifying
\begin{align}\label{multidominated}
    \|\left(\lev y_i^*, T(x_i^1,\dots, x_i^n\right.\right.&\left.)\left.-T(z_i^1,\dots, z_i^n)  \rev \right)_{i=1}^m  \|_{r^*}\nonumber \\
	                                                &\leq C\|(x_i^1\tens\dots\tens x_i^n-z_i^1\tens\dots\tens z_i^n)_{i=1}^m\|_p^{w,\pi}  \|(y_i^*)_{i=1}^m\|_q^w
\end{align}
for all finite sequences $(x_i^1,\dots, x_i^n)_{i=1}^m$, $(z_i^1,\dots, z_i^n)_{i=1}^m$ in $\xpx$ and $(y_i^*)_{i=1}^m$ in $Y^*$. Condition \eqref{multidominated} is equivalent to the boundedness of the functional
\begin{eqnarray}\label{multitensor}
\fhi_T:(\xtxtyd,\alpha_{q^*,p^*}^\pi) &\into& \kk\\
                        \xxt\tens y^*   &\mapsto& \lev  y^*  ,  T\xxp \rev,\nonumber
\end{eqnarray}
where $\alpha_{q^*,p^*}^\pi$ is a tensor norm which we call a Laprest\'e tensor norm (see Definition \ref{laprestenorms}). Moreover, \eqref{multitensor} implies the existence of a constant $S$ and probability measures $\mu$ and $\nu$ on $B_{\Lxx}$ and $B_{Y^{**}}$ such that
\begin{align}\label{multidomination}
| \lev  y^* \right.&\left., T(x) - T(z) \rev |\nonumber\\
                 &\leq S\left(\int\limits_{B_{\Lxx}} |\psi(x)-\psi(z)|^{p}d\mu(\psi)\right)^{\frac{1}{p}}     \left(\int\limits_{B_{Y^{**}}} |y^{**}(y^*)|^{q}d\nu(y^{**})\right)^{\frac{1}{q}}
\end{align}
for all $x,z$ in $\xpx$ and $y^*$ in $Y^*$. For its part,  inequality \eqref{multidomination} implies that $T$ factors as follows
\begin{small}
\begin{eqnarray}\label{multifactorization}
\begin{array}{c}
\xymatrix{
\xpx\ar[dr]_{A}\ar[rr]^-T &                         & Y \\
                                            &A(\xpx)\ar[ur]_B\ar[d]  &     \\
                                            &         Z              &
}
\end{array},
\end{eqnarray}
\end{small}
where $Z$ is a Banach space, $A:\xpx\into Z$ is a Lipschitz $p$-summing multilinear operator and $B:A(\xpx)\into Y$ is a Lipschitz function whose adjoint linear operator $B^*:Y^*\into (A(\xpx))^*$ is $q$-summing. Furthermore, this kind of factorization for a multilinear operator implies \eqref{multidominated}. Thus, the four conditions are equivalent.

Some remarks are in order. In \eqref{multidominated} $r$ is a suitable value depending on $p$ and $q$ and $\|\cdot \|_p^{w,\pi}$ is the weak $p$-norm of weak $p$-summable sequences in $\xtxp$ (see \cite[Chapter 2]{diestel95}). In \eqref{multifactorization}, the notion of Lipschitz $p$-summing multilinear operators is the one introduced in \cite{angulo18}. The details of a Lipschitz function $B$ and its linear adjoint $B^*$ are given in Lemma \ref{sigmaimages}.  If a multilinear operator $T$ verifies any of the equivalences listed above, then the best possible constant of \eqref{multidominated} coincides with the norm of $\fhi_T$ in \eqref{multitensor}, with the best constant in \eqref{multidomination} and with $\inf\pi_p(A)\pi_q(B^*)$ where the infimum is taken over all possible factorizations as in \eqref{multifactorization}.

We will say that a bounded multilinear operator $\Txxy$ between Banach spaces is $\pi$-$(p,q)$-dominated if it verifies any of the equivalences listed above. As we will see, the set of all $\pi$-$(p,q)$-dominated multilinear operators from $\xpx$ to $Y$ is a vector space and
$$D_{p,q}^\pi(T):=\inf C = \|\fhi_T\| = \inf S = \inf \pi_p(A)\pi_q(B^*)$$
defines a norm on it.

When  $n=1$,  we recover the class of $(p,q)$-dominated linear operators. As we said before, in the linear setting the properties of being $(p,\infty)$-dominated and $p$-summing are the same. In the multilinear  case, for an arbitrary value of $n$, a multilinear operator $T$ is $\pi$-$(p,\infty)$-dominated if and only if $T$ is Lipschitz $p$-summing (see Corollary \ref{psumming}). That is, the class of $\pi$-$(p,q)$-dominated operators contains the class of Lipschitz $p$-summing multilinear operators.

In Section 2 we introduce the Laprest\'e tensor norm $\alpha_{p,q}^\pi$ and characterize the functionals $\fhi$ bounded  on $(\xtxty,\alpha_{p,q}^\pi)$. In Section 3 we prove the equivalences listed above selecting condition \eqref{multidominated} as definition of a $\pi$-$(p,q)$-dominated multilinear operator.

In Subsection 4.1 we prove that the $\pi$-$(p,q)$-dominated property has a local behavior, this is, it depends on the finite dimensional subspaces of the factors $X_i$. To prove this, we have to extend our study of $\pi$-$(p,q)$-dominated multilinear operators $\Txxy$ regarding any reasonable crossnorm $\beta$ on $\xtx$ (see Definition \ref{betadominatedoperators} and Theorem \ref{maximal}). Similarly, we show that the Laprest\'e tensor norm $\alpha_{p,q}^\pi$ of a tensor $u$ in $\xtxty$ is determined by the finite dimensional subspaces of $X_i$ and $Y$. For proving this result we introduce the Laprest\'e tensor norms $\alpha_{p,q}^\beta$ (see Definition \ref{betalaprestenorms} and Proposition \ref{finitelygenerated}).

In Subsection 4.2 we study of the class $\mathcal{D}_{p,q}$ of all $(p,q)$-dominated multilinear operators as an ideal. The combination of Proposition \ref{ideal} and Theorem \ref{maximal} tells us that $\mathcal{D}_{p,q}$ has a maximal ideal demeanor. We also explore some properties of the Laprest\'e tensor norms $\alpha_{p,q}^\beta$ in Proposition \ref{tensornorm}. This result when combined with Proposition \ref{finitelygenerated} says that the Laprest\'e tensor norms $\alpha_{p,q}^\beta$ behaves alike a finitely generated tensor norm. Finally, in Theorem \ref{tensorialrepresentation} we give a complete representation of the class $\mathcal{D}_{p,q}$ in terms of the Laprest\'e tensor norms $\alpha_{p,q}^\beta$.

In Section 5 we make some final remarks about the obtained results.


\subsection{A Geometric Approach to Work with  Multilinear Mappings}
In this subsection we briefly describe the geometric approach we have used  to derive our   notion of $(p,q)$-domination for multilinear mappings.   It consists, basically, in study a multilinear map $T$ by means of an auxiliary   homogeneous  function  $f_T$ called  its associated $\Sigma$-operator. The details can be found in \cite{fernandez-unzueta18a}.

We  use standard notation of Banach spaces theory, multilinear operators and tensor products. The letter $\kk$ denotes the field of real or complex numbers. The unit ball of the normed space $X$ is denoted by $B_X$. The operator $K_X:X\into X^{**} $ denotes the canonical embedding giving by evaluation.

Throughout this work $n$ denotes a positive integer and the capital letters $X_1,\dots,$ $X_n$, $Y$ and $Z$ denote Banach spaces over the same field. The symbol $\Lxxy$ denotes the Banach space of all bounded multilinear operators $T:\xpx\into Y$ with the usual uniform norm $\|T\|=\sup\{ \|T\xxp\| : \|x_i\|\leq 1\}$. We simply write $\mathcal{L}(\xxx)$ when $Y=\kk$.

The set of decomposable tensors of the algebraic tensor product $\xtx$ is denoted by $\sxx$. That is, $\sxx:=\left\{\;  \xxt  \;|\;  x^i\in X_i  \;\right\}$. We denote by $\pi$ the projective tensor norm on $\xtx$ given by
$$\pi(u)= \inf \left\{\;  \sumim \|x_i^1\|\dots\|x_i^n\|  \;\Big|\; u=\sumim x_i^1\tens\dots\tens x_i^n \;\right\}$$
for all $u$. We denote by $\sxxp$ the Segre cone of the spaces, that is, the metric space resulting by restricting the norm $\pi$ of $\xtxp$ to the set $\sxx$.

According to the universal property of the projective tensor product, for every bounded  multilinear operator $T:\xpx\into Y$ there exists a unique bounded linear operator $\tlin:\xtxp\into Y$ such that $T\xxp=\tlin(\xxt)$ for all $x^i\in X_i$, $1\leq i \leq n$. In particular, the restriction $\tlin|_{\sxxp}:\sxxp\into Y$ is a Lipschitz function. In this situation, the operator $\tlin$ is called the linearization of $T$ and the function $f_T:=\tlin|_{\sxxp}$ is named the $\so$ associated to $T$. In \cite[Theorem 3.2]{fernandez-unzueta18a} it is proved that $\|T\|=Lip(T)=\|\tlin\|$.

For a Banach space $X$ and $1\leq p\leq \infty$ we use the standard notation $\|\cdot\|_p^w$ to denote the norm of $p$-weak summable sequences in $X$. For the case of $\xtx$ we write explicitly the reasonable crossnorm, for example, if $(p_i)_{i=1}^m$ and $(q_i)_{i=1}^m$ are two finite sequences in $\sxx$ and $p$ is finite, then
$$\|(p_i-q_i)_{i=1}^m\|_p^{w,\pi}=\sup\limits_{\fhi\in B_{\Lxx}} \left(\sumim |f_\fhi(p_i)-f_\fhi(q_i)|^p\right)^{\frac{1}{p}}.$$

We will  require the consideration of sets of the form $f_A(\sxx)=A(\xpx)$ where $A:\xpx\into Z$ is a bounded multilinear operator. We collect some important facts of these sets in the next lemma, which  was already used in \cite{fernandez-unzueta18b}.

\begin{lemma}\label{sigmaimages}
Let $A:\xpx\into Z$ be a bounded multilinear operator between Banach spaces. Then:
\begin{itemize}
\item [i)] The set $(f_A(\sxx))^*$ of all Lipschitz functions $\psi:f_A(\sxx)\into \kk$ such that $\psi A$ is multilinear is a vector space endowed with the algebraic operations defined pointwise; moreover, it becomes a Banach space with the Lipschitz norm.
\item [ii)] Let $B:f_A(\sxx)\into Y$ be a Lipschitz function such that the composition $BA:\xpx\into Y$ is multilinear. The function
\begin{eqnarray}\label{adjoint}
B^*:Y^* &\into& (f_A(\sxx))^*\\
y^*&\mapsto& y^*B.\nonumber
\end{eqnarray}
is a well defined bounded linear operator and $\|B^*\|\leq Lip(B)$. The linear operator $B^*$ is called the adjoint of $B$.
\end{itemize}
\end{lemma}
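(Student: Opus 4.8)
The plan is to handle the two items separately, reducing each to elementary properties of Lipschitz maps together with one structural observation: every $\psi$ in $(f_A(\sxx))^*$ vanishes at the origin. Indeed, since $A$ is multilinear we have $0=A(0,x^2,\dots,x^n)\in A(\xpx)=f_A(\sxx)$, and the requirement that $\psi A$ be multilinear gives $\psi(0)=(\psi A)(0,x^2,\dots,x^n)=0$. This is the point on which part i) turns, because it promotes the Lipschitz seminorm to a genuine norm: a Lipschitz function with zero seminorm is constant, and a constant vanishing at $0$ is identically $0$. The linear structure is then immediate, for if $\psi_1A$ and $\psi_2A$ are multilinear and $\lambda\in\kk$, then $(\psi_1+\lambda\psi_2)A=\psi_1A+\lambda(\psi_2A)$ is multilinear and $\psi_1+\lambda\psi_2$ is Lipschitz; hence $(f_A(\sxx))^*$ is a linear subspace of the Lipschitz scalar functions on $f_A(\sxx)$, taken with the metric inherited from $Z$.

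For completeness I would exhibit $(f_A(\sxx))^*$ as a closed subspace of the Banach space $Lip_0(f_A(\sxx))$ of Lipschitz scalar functions vanishing at $0$ (equivalently, argue directly). Given a Cauchy sequence $(\psi_k)$ in the Lipschitz norm, the estimate
$$|\psi_k(w)-\psi_j(w)|=|(\psi_k-\psi_j)(w)-(\psi_k-\psi_j)(0)|\leq Lip(\psi_k-\psi_j)\,\|w\|_Z$$
shows that $(\psi_k)$ is pointwise Cauchy, so it converges pointwise to some $\psi$, which is again Lipschitz with $Lip(\psi)\leq\liminf_k Lip(\psi_k)$ and still satisfies $\psi(0)=0$; a routine triangle-inequality argument upgrades pointwise convergence to convergence in the Lipschitz norm. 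The step requiring the most care --- and the main (if modest) obstacle --- is verifying that the limit stays in the subspace, that is, that $\psi A$ is multilinear. This holds because each $\psi_kA$ is multilinear and the defining identities of multilinearity (additivity and homogeneity in each coordinate) are equalities stable under the pointwise limit $\psi_kA\to\psi A$.

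For part ii) the verification is direct. Fix $y^*\in Y^*$. The composite $y^*B$ is Lipschitz, being the composition of the Lipschitz map $B$ with $y^*$, which is Lipschitz with constant $\|y^*\|$; moreover $(y^*B)A=y^*(BA)$ is multilinear because $BA$ is multilinear and $y^*$ is linear. Hence $B^*(y^*)=y^*B$ indeed lies in $(f_A(\sxx))^*$, so $B^*$ is well defined, and linearity of $B^*$ follows at once from the pointwise operations. For the norm bound I would estimate, for $u,v\in f_A(\sxx)$,
$$|y^*B(u)-y^*B(v)|=|y^*(B(u)-B(v))|\leq\|y^*\|\,\|B(u)-B(v)\|_Y\leq\|y^*\|\,Lip(B)\,\|u-v\|_Z,$$
whence $\|B^*(y^*)\|=Lip(y^*B)\leq\|y^*\|\,Lip(B)$; taking the supremum over $\|y^*\|\leq1$ gives $\|B^*\|\leq Lip(B)$, completing the proof.
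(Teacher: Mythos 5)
Your proof is correct and complete. The paper itself gives no argument for this lemma (it is stated without proof and deferred to the reference \cite{fernandez-unzueta18b}), so there is nothing in-text to compare against; your route is the natural one, and the two points that actually need care --- the observation that multilinearity of $\psi A$ forces $\psi(0)=0$, which upgrades the Lipschitz seminorm to a norm, and the verification that the multilinearity of $\psi A$ survives pointwise limits so that $(f_A(\sxx))^*$ is closed in $Lip_0(f_A(\sxx))$ --- are both handled correctly. The estimate $Lip(y^*B)\leq\|y^*\|\,Lip(B)$ in part ii) is exactly what yields $\|B^*\|\leq Lip(B)$, so no gaps remain.
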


The general procedure that we will apply to move from a given theory on linear operators $S:X\into Y$  to the broader context of multilinear operators $\Txxy$ is described as follows: First, understand a specific type of boundedness condition on linear operators $S$ as a continuous (or, equivalently, a Lipschitz) condition. Second, formulate such Lipschitz condition for the associated $\Sigma$-operators $f_T:\sxx\into Y$ and, finally, write this conditions in terms of multilinear mappings $T$ using the relation $f_T(x_1\otimes\cdots\otimes x_n)=T(x_1,\ldots,x_n)$.


\section{The Laprest\'e Tensor Norm}

In order to define the Laprest\'e norm of $u$ in $\xtxty$, especial representations of $u$ must be considered. These representations are those of the form $\sum_{i=1}^m\lambda_i(\pimqi)\tens y_i$ where $p_i$, $q_i$ and $y_i$, $1\leq i\leq m$, are in $\sxx$ and $Y$, respectively. Similar representations appeared for the first time in \cite{angulo10} and subsequently in \cite{fernandez-unzueta18b}.

\begin{definition}\label{laprestenorms}
Let $1\leq p, q\leq \infty$ such that $\frac{1}{p}+\frac{1}{q}\geq1$. Take the unique $r\in[1,\infty]$ determined by $1=\frac{1}{r}+\frac{1}{q^*}+\frac{1}{p^*}$. Let $\xxx$ and $Y$ be Banach spaces. We define the Laprest\'e norm $\apq^\pi$ on $\xtxty$ by
$$\apq^\pi(u):=\inf \left\{ \|(\lambda_i)_{i=1}^m\|_r \|(\pimqi)_{i=1}^m\|_{q^*}^{w,\pi} \|(y_i)_{i=1}^m\|_{p^*}^w  \Big|   u=\rlu \right\}.$$
\end{definition}

We include the proof that $\apq^\pi$ is actually a norm in Proposition \ref{tensornorm} where other properties are also presented. Throughout this section and the next we will assume this fact. Plainly, taking $n=1$ we have a generalization of the Laprest\'e norm for the case of two factors (see \cite[Sec. 12.5]{defant93}).

\begin{proposition}\label{link}
Let $\xxx$ and $Y$ be Banach spaces. The following are equivalent:
\begin{itemize}
	\item  [i)]   $\zeta$ is a bounded functional on $(\xtxty,\apq^\pi)$.
	\item  [ii)]  There exists $C>0$ such that
	$$\|(\lev \zeta ,  (\pimqi)\tens y_i   \rev   )_{i=1}^m\|_{r^*}\leq C\|(p_i-q_i)_{i=1}^m\|_{q^*}^{w,\pi}\|(y_i)_{i=1}^m\|_{p^*}^w$$
	for all finite sequences $(p_i)_{i=1}^m$, $(q_i)_{i=1}^m$ in $\sxxp$ and $(y_i)_{i=1}^m$ in $Y$.
\end{itemize}
In this case $\|\zeta\|=\inf C$ where the infimum is taken over all the constants $C$ as above.
\end{proposition}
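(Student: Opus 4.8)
The plan is to prove the two implications separately, keeping careful track of the constants so that the asserted equality $\|\zeta\|=\inf C$ emerges at the end. Both directions rest on the same elementary duality between $\ell_r$ and $\ell_{r^*}$ (recall $1=\frac{1}{r}+\frac{1}{q^*}+\frac{1}{p^*}$, so that $r$ and $r^*$ are conjugate exponents) together with the definition of $\apq^\pi$ as an infimum over special representations.

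For the implication (ii) $\Rightarrow$ (i), I would fix a tensor $u$ in $\xtxty$ and an arbitrary special representation $u=\rlu$ with $p_i,q_i$ in $\sxxp$ and $y_i$ in $Y$. Then I would write
$$|\lev\zeta,u\rev|=\left|\sumim\lambda_i\lev\zeta,(\pimqi)\tens y_i\rev\right|$$
and apply H\"older's inequality with the conjugate exponents $r$ and $r^*$ to split this into $\|(\lambda_i)_{i=1}^m\|_r$ times $\|(\lev\zeta,(\pimqi)\tens y_i\rev)_{i=1}^m\|_{r^*}$. The hypothesis (ii) bounds the second factor by $C\|(\pimqi)_{i=1}^m\|_{q^*}^{w,\pi}\|(y_i)_{i=1}^m\|_{p^*}^w$, so the product is exactly $C$ times the quantity whose infimum defines $\apq^\pi(u)$. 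Taking the infimum over all representations of $u$ yields $|\lev\zeta,u\rev|\le C\,\apq^\pi(u)$, hence $\zeta$ is bounded with $\|\zeta\|\le C$; minimizing over admissible $C$ gives $\|\zeta\|\le\inf C$.

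For the converse (i) $\Rightarrow$ (ii), I would start from finite sequences $(p_i)_{i=1}^m$, $(q_i)_{i=1}^m$ in $\sxxp$ and $(y_i)_{i=1}^m$ in $Y$, set $a_i:=\lev\zeta,(\pimqi)\tens y_i\rev$, and use duality between $\ell_r$ and $\ell_{r^*}$ to select scalars $(\mu_i)_{i=1}^m$ with $\|(\mu_i)_{i=1}^m\|_r\le1$ realizing $\|(a_i)_{i=1}^m\|_{r^*}=\sumim\mu_i a_i$. The point is that the tensor $u:=\sumim\mu_i(\pimqi)\tens y_i$ is itself a special representation, so by definition $\apq^\pi(u)\le\|(\mu_i)_{i=1}^m\|_r\|(\pimqi)_{i=1}^m\|_{q^*}^{w,\pi}\|(y_i)_{i=1}^m\|_{p^*}^w$. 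Combining $\|(a_i)_{i=1}^m\|_{r^*}=\lev\zeta,u\rev\le\|\zeta\|\,\apq^\pi(u)$ with this estimate and $\|(\mu_i)_{i=1}^m\|_r\le1$ delivers the inequality in (ii) with $C=\|\zeta\|$, whence $\inf C\le\|\zeta\|$. Together with the previous paragraph this proves the equivalence and the norm identity.

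The only delicate points, which I would address explicitly, are the attainment of the duality in the selection of $(\mu_i)_{i=1}^m$ and the boundary exponents. For finite sequences the supremum defining the $\ell_{r^*}$ norm is attained, so the scalars $(\mu_i)_{i=1}^m$ genuinely exist even when $r=1$ or $r=\infty$; over $\kk=\mathbb{C}$ one simply allows complex $\mu_i$, the bilinear pairing $\sum\mu_i a_i$ still recovering $\|(a_i)_{i=1}^m\|_{r^*}$. I expect the main, though modest, obstacle to be bookkeeping: ensuring that the $\mu_i$ chosen by duality are admissible as the coefficients $\lambda_i$ of a special representation, and that the three-fold normalization in the definition of $\apq^\pi$ is matched exactly by the two-fold H\"older split plus the hypothesis (ii), so that no constant is lost and the two inequalities $\|\zeta\|\le\inf C$ and $\inf C\le\|\zeta\|$ close up to the stated equality.
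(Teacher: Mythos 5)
Your proof is correct and follows essentially the same route as the paper: both directions rest on the duality $(\ell_r^m)^*=\ell_{r^*}^m$ combined with the definition of $\apq^\pi$ as an infimum over special representations, the only cosmetic difference being that you extract explicit norming scalars $(\mu_i)$ where the paper writes the $\ell_{r^*}$ norm directly as a supremum over $\|(\lambda_i)\|_r\leq 1$. The constant bookkeeping matches and yields $\|\zeta\|=\inf C$ exactly as in the paper.
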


\begin{proof}
(i)$\Rightarrow$(ii): Linearity of $\zeta$ and the fact $(\ell_r^m)^*=\ell_{r^*}^m$ imply
\begin{align*}
\|(\lev \zeta  ,  (\pimqi)\tens y_i \rev )_{i=1}^m\|_{r^*} &= \sup\limits_{\|(\lambda_i)_{i=1}^m\|_r\leq 1}  \left|\zeta\left(\sumim \lambda_i (\pimqi)\tens y_i\right)\right|\\
  &\leq \|\zeta\|\|(p_i-q_i)_{i=1}^m\|_{q^*}^{w,\pi}\|(y_i)_{i=1}^m\|_{p^*}^w.
\end{align*}
(ii)$\Rightarrow$(i): Let $u=\rlu$. Then
\begin{align*}
|\zeta(u)| =\left|\sumim \lambda_i \zeta((\pimqi)\tens y_i)\right|&\leq \|(\lambda_i)_{i=1}^m\|_r \|(\lev\zeta  ,     (\pimqi)\tens y_i \rev )_{i=1}^m\|_{r^*}\\
				 &\leq  C\|(\lambda_i)_{i=1}^m\|_r \|(p_i-q_i)_{i=1}^m\|_{q^*}^{w,\pi}\|(y_i)_{i=1}^m\|_{p^*}^w.
\end{align*}
\end{proof}

Next, we present a characterization of the bounded linear functionals on the normed space $(\xtxty,\apq^\pi)$. The linear analogous of this result is contained in the original proof of the characterization of $p$-dominated operators of S. Kwapie\'n, see \cite[Proposition 2]{kwapien72}.

\begin{theorem}\label{functionals}
Let $\xxx$, $Y$ be Banach spaces. The following are equivalent:
\begin{itemize}
	\item [i)]   $\zeta$ is a bounded linear functional on $\left(\xtxty,\apq^\pi\right)$.
	\item [ii)]  For any $w^*$-compact norming subsets $K\subset B_{\Lxx}$ and $L\subset B_{Y^*}$ there exist a nonnegative constant $C$ and probability regular Borel measures $\mu$ and $\nu$ on $K$ and $L$ respectively such that for all $a,b$ decomposable tensors in $\xtx$ and $y$ in $Y$
\begin{small}
$$\hspace{1.2cm}|\lev  \zeta  ,   (a-b)\tens y \rev| \leq   C\left(\int\limits_K |f_\psi(a)-f_\psi(b)|^{q^*}d\mu(\psi)\right)^{\frac{1}{q^*}}     \left(\int\limits_L |y^*(y)|^{p^*}d\nu(y^{*})\right)^{\frac{1}{p^*}}$$
\end{small}
(where the first integral is replaced by $\pi(a-b)$ if $q=1$ and the second by $\|y\|$ if $p=1$).
\end{itemize}
Under these circumstances $\|\zeta\|=\inf C$.
\end{theorem}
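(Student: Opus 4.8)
The plan is to read condition (i), via Proposition \ref{link}, as the single scalar inequality
\[
\|(\lev\zeta,(\pimqi)\tens y_i\rev)_{i=1}^m\|_{r^*}\le C\,\|(p_i-q_i)_{i=1}^m\|_{q^*}^{w,\pi}\,\|(y_i)_{i=1}^m\|_{p^*}^w
\]
with $C=\|\zeta\|$, and then to manufacture the two measures by a Pietsch-type argument, which is the multilinear counterpart of Kwapie\'n's proof in \cite{kwapien72}. Throughout I use the relation $\tfrac{1}{r^*}=\tfrac{1}{q^*}+\tfrac{1}{p^*}$, equivalently $\tfrac{r^*}{q^*}+\tfrac{r^*}{p^*}=1$, so that $q^*/r^*$ and $p^*/r^*$ are conjugate and $\theta:=r^*/q^*\in[0,1]$ with $1-\theta=r^*/p^*$. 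The implication (ii)$\Rightarrow$(i) is the routine direction: I raise the domination inequality to the power $r^*$, sum over $i$, apply H\"older with the conjugate pair $(q^*/r^*,p^*/r^*)$, interchange the finite sum with the integrals, and use that $\mu,\nu$ are probability measures and $K\subseteq B_{\Lxx}$, $L\subseteq B_{Y^*}$ to bound the integrals by $(\|(p_i-q_i)\|_{q^*}^{w,\pi})^{r^*}$ and $(\|(y_i)\|_{p^*}^w)^{r^*}$. Taking $r^*$-th roots recovers Proposition \ref{link}(ii), giving $\|\zeta\|\le C$, hence $\|\zeta\|\le\inf C$.

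The substance is (i)$\Rightarrow$(ii), which I would prove with Ky Fan's lemma on the $w^*$-compact convex set $Q:=P(K)\times P(L)$ of pairs of probability measures inside $C(K)^*\times C(L)^*$. The obstruction is that the target inequality is multiplicative in $(\mu,\nu)$, hence not affine; I linearize it by Young's inequality together with a scaling parameter. Concretely, for each finite datum $D=((a_j,b_j,y_j,s_j))_{j=1}^m$ with $a_j,b_j$ decomposable, $y_j\in Y$ and $s_j>0$, I define the affine (hence convex and $w^*$-continuous) function on $Q$
\begin{align*}
\Phi_D(\mu,\nu):=\sum_{j}\Big[|\lev\zeta,(a_j-b_j)\tens y_j\rev|^{r^*} &-C^{r^*}\Big(\tfrac{r^*}{q^*}s_j^{q^*}\int_K|f_\psi(a_j)-f_\psi(b_j)|^{q^*}d\mu\\
&+\tfrac{r^*}{p^*}s_j^{-p^*}\int_L|y^*(y_j)|^{p^*}d\nu\Big)\Big].
\end{align*}
The family $\{\Phi_D\}$ is stable under concatenation of data and positive rescaling, so it is concave-like, as Ky Fan requires.

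The key claim is that each $\Phi_D$ takes a value $\le0$ on $Q$. Since $\Phi_D$ is affine, its infimum over $Q$ is attained at point masses and equals $\sum_j|\lev\zeta,(a_j-b_j)\tens y_j\rev|^{r^*}-C^{r^*}(\tfrac{r^*}{q^*}M_K+\tfrac{r^*}{p^*}M_L)$, where $M_K=\max_{\psi\in K}\sum_j s_j^{q^*}|f_\psi(a_j)-f_\psi(b_j)|^{q^*}$ and $M_L=\max_{y^*\in L}\sum_j s_j^{-p^*}|y^*(y_j)|^{p^*}$. Applying Proposition \ref{link}(ii) to the rescaled data $s_ja_j,\,s_jb_j,\,s_j^{-1}y_j$ (which leaves each $\lev\zeta,(a_j-b_j)\tens y_j\rev$ invariant, since $(s_ja_j-s_jb_j)\tens s_j^{-1}y_j=(a_j-b_j)\tens y_j$) and using that $K,L$ are norming to replace the weak norms by $M_K^{1/q^*},M_L^{1/p^*}$, I obtain $\sum_j|\lev\zeta,(a_j-b_j)\tens y_j\rev|^{r^*}\le C^{r^*}M_K^{r^*/q^*}M_L^{r^*/p^*}\le C^{r^*}(\tfrac{r^*}{q^*}M_K+\tfrac{r^*}{p^*}M_L)$ by Young, i.e.\ $\inf_Q\Phi_D\le0$. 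Ky Fan then yields a single pair $(\mu,\nu)\in Q$ with $\Phi_D(\mu,\nu)\le0$ for all $D$; specializing to one-point data $(a,b,y,s)$ gives $|\lev\zeta,(a-b)\tens y\rev|^{r^*}\le C^{r^*}(\tfrac{r^*}{q^*}s^{q^*}U+\tfrac{r^*}{p^*}s^{-p^*}V)$ for all $s>0$, with $U=\int_K|f_\psi(a)-f_\psi(b)|^{q^*}d\mu$ and $V=\int_L|y^*(y)|^{p^*}d\nu$; minimizing over $s$ restores the sharp product $C^{r^*}U^{r^*/q^*}V^{r^*/p^*}$, and an $r^*$-th root gives (ii) with $C=\|\zeta\|$, so $\inf C\le\|\zeta\|$ and the two directions together give $\|\zeta\|=\inf C$.

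I expect the main obstacle to be exactly this linearization: turning the multiplicative domination into an affine family on $Q$ by Young's inequality plus the scaling parameters $s_j$, and checking that optimizing the scaling afterwards recovers the sharp product so that no constant is lost. Secondary technical points that I would need to verify carefully are that $K,L$ being $w^*$-compact and norming lets one replace suprema over $B_{\Lxx}$ and $B_{Y^*}$ by maxima over $K,L$ for the convex $w^*$-continuous functions appearing, the concave-like hypothesis of Ky Fan for $\{\Phi_D\}$, and the separate treatment of the degenerate cases $q=1$ and $p=1$, where the corresponding integral is replaced by $\pi(a-b)$ or $\|y\|$ and the matching factor simply drops out of the argument.
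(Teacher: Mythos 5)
Your proposal is correct and follows essentially the same route as the paper: Ky Fan's lemma applied to the compact convex set $M_1^+(K)\times M_1^+(L)$, with the multiplicative domination linearized into an affine family via Young's inequality $\tfrac{s}{c}+\tfrac{t}{c^*}\geq s^{1/c}t^{1/c^*}$, the norming/compactness of $K$ and $L$ used to locate a Dirac pair where each function has the right sign, and a final scaling optimization recovering the sharp product. The only cosmetic difference is that you carry the scaling parameters $s_j$ inside the family of affine functions, whereas the paper absorbs scalings into the tensors when checking convexity of $\mathcal{F}$ and applies the $s,t$-optimization only after Ky Fan; both bookkeeping choices work.
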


\begin{proof}
The case $p=1$ follows from \cite[Theorems 2.26, 3.1]{angulo10} (analogously the case $q=1$). We restrict our attention to $1<p,q$, hence, $r^*<\infty$.

Let $\zeta$ be a bounded linear functional such that $\|\zeta\|= 1$. Let $M_1^+(K)\subset C(K)^*$ and $M_1^+(L)\subset C(L)^*$ be the sets of probability measures on $K$ and $L$, respectively. Define $C:=M_1^+(K)\times M_1^+(L)\subset C(K)^*\times C(L)^*$. Notice that $C$ is a compact subset of $\left(C(K)^*,w^*\right)\times \left(C(L)^*,w^*\right)$.

For each $a,b$ in $\sxx$ and $y$ in $Y$ consider $I_{a,b}$ in $C(K)$ and $I_y$ in $C(L)$ defined by
\begin{eqnarray*}
I_{a,b}:(K,w^*)  &\into & \rr\\
                         \psi      &\mapsto &  |f_\psi(a)-f_\psi(b)|^{q^*},
\end{eqnarray*}
\begin{eqnarray*}
I_y:(L,w^*)  &\into & \rr\\
                         y^*      &\mapsto &  |y^*(y)|^{p^*}.
\end{eqnarray*}
Also consider $K_{C(K)}:C(K)\into C(K)^{**}$ and $K_{C(L)}:C(L)\into C(L)^{**}$. Then
$$ \lev K_{C(K)}(I_{a,b})  ,   \mu\rev=\mu(I_{a,b})=\int\limits_K |f_\psi(a)-f_\psi(b)|^{q^*}d\mu(\psi) \qquad\forall\, \mu\in C(K)$$
and
$$ \lev K_{C(L)}(I_y)  ,  \nu\rev=\nu(I_y)=\int\limits_L |y^*(y)|^{p^*}d\nu(y^*) \qquad\forall\, \nu\in C(L).$$
Hence, the function
\begin{eqnarray*}
H_{a,b,y}:C   &\into &  \rr\times \rr\\
                    (\mu,\nu)   &\mapsto &  \left(  \lev K_{C(K)}(I_{a,b})  ,   \mu\rev \,,\,  \lev K_{C(L)}(I_y)  ,  \nu\rev \right)
\end{eqnarray*}
is continuous. Define
$$f_{a,b}=\pi_1 \circ H_{a,b,y},$$
$$g_y=\pi_2\circ H_{a,b,y},$$
where $\pi_j:\rr\times \rr\into \rr$ is the $j$-th projection, $j=1,2$. Also, consider the constant function
\begin{eqnarray*}
c_{a,b,y}:C   &\into &  \rr\times \rr\\
                    (\mu,\nu)   &\mapsto &  |  \lev \zeta , (a-b)\tens y  \rev|^{r^*}.
\end{eqnarray*}
The functions $f_{a,b}$, $g_y$ and $c_{a,b,y}$ are continuous by construction. Moreover, it is a simple matter to prove that they are affine.

Let $\mathcal{F}$ be the set of all functions $f:C\into \rr$ for which there exist finite sequences $(p_i)_{i=1}^m$, $(q_i)_{i=1}^m$ in $\sxxp$ and $(y_i)_{i=1}^m$ in $Y$ such that
$$f=\sumim \frac{r^*}{q^*} f_{p_i,q_i}+ \frac{r^*}{p^*}g_{y_i}+c_{p_i,q_i,y_i}.$$
In particular, every $f$ in $\mathfrak{F}$ is upper semicontinuous and concave.

The set $\mathcal{F}$ is convex since if the functions $f_1=\sumim \frac{r^*}{q^*} f_{p_i,q_i}+ \frac{r^*}{p^*}g_{y_i}+c_{p_i,q_i,y_i}$ and $f_2=\sumim \frac{r^*}{q^*} f_{a_i,b_i}+ \frac{r^*}{p^*}g_{w_i}+c_{a_i,b_i,w_i}$ are in $\mathcal{F}$, then
\begin{align*}
\lambda_1f^1+\lambda_2f^2  =  \sumim \frac{r^*}{q^*} \left(f_{\lambda^\frac{1}{q^*}p_i,\lambda^\frac{1}{q^*}q_i}\right.&\left.+f_{\lambda^\frac{1}{q^*}a_i,\lambda^\frac{1}{q^*}b_i}\right)+ \frac{r^*}{p^*}\left(g_{\lambda^{\frac{1}{p\*}} y_i}+ g_{\lambda^\frac{1}{p^*}w_i}\right)\\
&+c_{\lambda^\frac{1}{q^*}p_i,\lambda^\frac{1}{q^*}q_i,\lambda^\frac{1}{p^*}y_i}+c_{\lambda^\frac{1}{q^*}a_i,\lambda^\frac{1}{q^*}b_i,\lambda^\frac{1}{p^*}w_i}
\end{align*}
holds for all $\lambda_1\geq 0$, $\lambda_2\geq 0$ such that $\lambda_1+\lambda_2=1$.

We claim that any $f$ in $\mathcal{F}$ is nonnegative in at least one point. To prove this, notice that every sequence $(y_i)_{i=1}^m$ defines a $w^*$-continuous function
\begin{eqnarray*}
Y^*	&\into &		  \rr\\
y^*	&\mapsto &  \left(\sumim |y^*(y_i)|^{p^*}\right)^{\frac{1}{p^*}}.
\end{eqnarray*}
Compactness of $L$ ensures the existence of $y_o^*$ such that
$$\|(y_i)_{i=1}^m\|_{p^*}^w= \left(\sumim |y_0^*(y_i)|^{p^*}\right)^{\frac{1}{p^*}}.$$
Analogously, there exists $\psi_0\in K$ such that
$$\|(\pimqi)_{i=1}^m\|_{q^*}^{w,\pi}=\left(\sumim |f_{\psi_0}(p_i)-f_{\psi_0}(q_i)|^{q^*}\right)^{\frac{1}{q^*}}.$$
For the Dirac measures $\delta_{\psi_0}$ on $K$ and $\delta_{y_0^*}$ in $L$ we have
\begin{align*}
f(\delta_{\psi_0},\delta_{y_0^*}) &=    \frac{r^*}{q^*}(\|(\pimqi)_{i=1}^m\|_{q^*}^{w,\pi})^{q^*} +\frac{r^*}{p^*}(\|(y_i)_{i=1}^m\|_{p^*}^w)^{q^*}-\sumim |\zeta( (\pimqi)\tens y_i)|^{r^*}\\
								                 &\geq	  (\|(\pimqi)_{i=1}^m\|_{q^*}^{w,\pi})^{\frac{q^*r^*}{q^*}}\,  (\|(y_i)_{i=1}^m\|_{p^*}^w)^{\frac{p^*r^*}{p^*}}-\sumim |\zeta( (\pimqi)\tens y_i)|^{r^*}\\
											          &=   \left(\|(\pimqi)_{i=1}^m\|_{q^*}^{w,\pi}\right)^{r^*}\,  (\|(y_i)_{i=1}^m\|_{p^*}^w )^{r^*}-\sumim |\zeta( (\pimqi)\tens y_i  ) |^{r^*}\\
								&\geq 0,
\end{align*}
where the first inequality follows from the fact $\frac{s}{c}+\frac{t}{c^*}\geq s^{\frac{1}{c}}t^{\frac{1}{c}}$ for all $s\geq 0$ and $t\geq 0$ and $1<c<\infty$ and the second from $\|\zeta\|= 1$.

Applying Ky Fan's lemma (see~\cite[A3]{defant93}) we obtain $(\mu,\nu)\in C$ such that
$$0\leq f(\mu,\nu)     \qquad\forall  f\in \mathcal{F}.$$
Hence
$$|\lev \zeta , (a-b)\tens y  \rev |^{r^*} \leq	\frac{r^*}{q^*}\int\limits_K |f_\psi(a)-f_\psi(b)|^{q^*}d\mu(\psi)+\frac{r^*}{p^*}\int\limits_L |y^*(y)|^{p^*}d\nu(y^*).$$

Notice that for any $s,t>0$ we have
\begin{align*}
| \lev\right. \zeta, (a-b)&\tens y  \left.\rev|=st\, | \lev \zeta  ,   s^{-1}(a-b)\tens t^{-1}y \rev | \\
                         &\leq st\left( \frac{r^*}{s^{q^*}q^*}\int\limits_K |f_\psi(a)-f_\psi(b)|^{q^*}d\mu(\psi)+\frac{r^*}{t^{p^*}p^*}\int\limits_L |y^*(y)|^{p^*}d\nu(y^*) \right)^{\frac{1}{r^*}}.
\end{align*}
Taking $s=\left(\int\limits_K |f_\psi(a)-f_\psi(b)|^{q^*}d\mu(\psi)\right)^{\frac{1}{q^*}}$ and $t=\left(\int\limits_L |y^*(y)|^{p^*}d\nu(y^*)\right)^{\frac{1}{p^*}}$ we obtain
$$| \lev\zeta  ,   (a-b)\tens y  \rev|\leq \left(\int\limits_K |f_\psi(b)-f_\psi(b)|^{q^*}d\mu(\psi)\right)^{\frac{1}{q^*}}\, \left(\int\limits_L |y^*(y)|^{p^*}d\nu(y^*)\right)^{\frac{1}{p^*}}$$
which is the inequality we were looking for. For the general case, an argument of normalization of $\zeta$ is enough. In this situation $\inf C\leq\|\zeta\|$.

Conversely, (ii) combined with H\"{o}lder inequality for $\frac{q^*}{r^*}$ and $\frac{p^*}{r^*}$ implies
\begin{small}
\begin{align*}
\|(\zeta((p_i&-q_i)\tens y_i))\|_{r^*} =\left(\sumim|\zeta((\pimqi)\tens y_i)|^{r^*}\right)^{\frac{1}{r^*}}\\
                                                                   &\leq C\,\left(\sumim\left(\int\limits_K |\psi(p_i)-\psi(q_i)|^{q^*}d\mu(\psi)\right)^{\frac{r^*}{q^*}}\left(\int\limits_L |y^*(y_i)|^{p^*}d\nu(y^*)\right)^{\frac{r^*}{p^*}}\right)^{\frac{1}{r^*}}\\
                                                                  &\leq C\,\left(\sumim\int\limits_K |\psi(p_i)-\psi(q_i)|^{q^*}d\mu(\psi)\right)^{\frac{1}{q^*}}\,\left(\sumim\int\limits_L |y^*(y_i)|^{p^*}d\nu(y^*)\right)^{\frac{1}{p^*}}\\
                                                                  &=C\,\left(\int\limits_K \sumim|\psi(p_i)-\psi(q_i)|^{q^*}d\mu(\psi)\right)^{\frac{1}{q^*}}\,\left(\int\limits_L \sumim |y^*(y_i)|^{p^*}d\nu(y^*)\right)^{\frac{1}{p^*}}\\
                                                                 &\leq   C\,\|(\pimqi)\|_{q^*}^{w,\pi} \|(y_i)\|_{p^*}^w.
\end{align*}
\end{small}
Proposition \ref{link} ensures that $\zeta$ is bounded and $\|\zeta\|\leq\inf C$.
\end{proof}


\section{$(p,q)$-Dominated Multilinear Operators}

As stated in Subsection 1.1, the procedure to translate the $(p,q)$-domination property for linear operators to the multilinear setting gives rise to the next definition.
\begin{definition}\label{dominatedoperators}
Let $1\leq p,q\leq \infty$ such that $\frac{1}{p}+\frac{1}{q}\leq 1$. Take the unique $r\in[1,\infty]$ such that $1=\frac{1}{r}+\frac{1}{p}+\frac{1}{q}$. The  multilinear operator $T:\xpx\into Y$ is called $\pi$-$(p,q)$-dominated if there exists a constant $C>0$ such that
\begin{align*}
 \|(\lev y_i^*  ,  T(x_i^1,\dots, x_i^n)\right.&\left.-T(z_i^1,\dots, z_i^n)  \rev)_{i=1}^m\|_{r^*}\\
	                                                &\leq C\|(x_i^1\tens\dots\tens x_i^n-z_i^1\tens\dots\tens z_i^n)_{i=1}^m\|_p^{w,\pi}  \|(y_i^*)_{i=1}^m\|_q^w
\end{align*}
holds for all finite sequences $(x_i^1,\dots, x_i^n)_{i=1}^m$, $(z_i^1,\dots, z_i^n)_{i=1}^m$ in $\xpx$ and $(y_i^*)_{i=1}^m$ in $Y^*$. Define $D_{p,q}^\pi(T)$ as the infimum of all the constants $C$ as above.
\end{definition}

Proposition \ref{link} allows us to translate the $\pi$-$(p,q)$-dominated property of multilinear operators to the tensorial context (see also Theorem \ref{tensorialrepresentation}).

\begin{proposition}\label{isometry}
The multilinear operator $\Txxy$ is $\pi$-$(p,q)$-dominated if and only if the functional
\begin{eqnarray*}
\zeta_T:(\xtxtyd, \alpha_{q^*,p^*}^\pi) &\into & \kk\\
\xxt\tens y^* &\mapsto &  \lev  y^*  ,  T\xxp  \rev
\end{eqnarray*}
is bounded. In this case $D_{p,q}^\pi(T)=\|\zeta_T\|$.
\end{proposition}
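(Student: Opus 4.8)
The plan is to read the statement off Proposition \ref{link}, applied to the functional $\zeta_T$ and to the Laprest\'e norm $\alpha_{q^*,p^*}^\pi$ on $\xtxtyd$; that is, I would invoke Proposition \ref{link} after replacing the pair $(p,q)$ by $(q^*,p^*)$ and the space $Y$ by $Y^*$. First I would check that $\zeta_T$ is a genuine linear functional on the algebraic tensor product $\xtxtyd$: the assignment $(x^1,\dots,x^n,y^*)\mapsto\lev y^*,T\xxp\rev$ is $(n+1)$-linear, since $T$ is multilinear and the duality pairing is linear in $y^*$, so the universal property of the tensor product yields a unique linear map $\zeta_T$ with $\zeta_T(\xxt\tens y^*)=\lev y^*,T\xxp\rev$.

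Before invoking Proposition \ref{link} I would confirm that the indices line up, which is really the only place where care is needed. The hypothesis $\frac1p+\frac1q\leq 1$ of Definition \ref{dominatedoperators} gives $\frac{1}{q^*}+\frac{1}{p^*}=2-\left(\frac1p+\frac1q\right)\geq 1$, so the Laprest\'e norm $\alpha_{q^*,p^*}^\pi$ is legitimately defined in the sense of Definition \ref{laprestenorms}. Moreover, the auxiliary exponent that Definition \ref{laprestenorms} attaches to $\alpha_{q^*,p^*}^\pi$ is the $r$ solving $1=\frac1r+\frac{1}{(p^*)^*}+\frac{1}{(q^*)^*}=\frac1r+\frac1p+\frac1q$, which is exactly the relation defining $r$ in Definition \ref{dominatedoperators}; hence the two occurrences of $r$, and of $r^*$, agree. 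Finally, under the same substitution the two factors on the right-hand side of the inequality in Proposition \ref{link} become $\|\cdot\|_{(p^*)^*}^{w,\pi}=\|\cdot\|_p^{w,\pi}$ and $\|\cdot\|_{(q^*)^*}^w=\|\cdot\|_q^w$, matching Definition \ref{dominatedoperators}.

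With these identifications, Proposition \ref{link} asserts that $\zeta_T$ is bounded on $(\xtxtyd,\alpha_{q^*,p^*}^\pi)$ exactly when there is a constant $C>0$ with
$$\|(\lev\zeta_T,(\pimqi)\tens y_i^*\rev)_{i=1}^m\|_{r^*}\leq C\,\|(\pimqi)_{i=1}^m\|_p^{w,\pi}\,\|(y_i^*)_{i=1}^m\|_q^w$$
for all finite sequences $(p_i)_{i=1}^m$, $(q_i)_{i=1}^m$ in $\sxxp$ and $(y_i^*)_{i=1}^m$ in $Y^*$, and then $\|\zeta_T\|=\inf C$. Writing the decomposable tensors as $p_i=x_i^1\tens\dots\tens x_i^n$ and $q_i=z_i^1\tens\dots\tens z_i^n$ (every finite sequence in $\sxxp$ arises this way, and the inequality depends only on the tensors, not on a chosen representation), linearity of $\zeta_T$ together with its defining formula gives $\lev\zeta_T,(\pimqi)\tens y_i^*\rev=\lev y_i^*,T(x_i^1,\dots,x_i^n)-T(z_i^1,\dots,z_i^n)\rev$. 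Substituting this turns the displayed inequality into the domination inequality of Definition \ref{dominatedoperators} word for word. Thus boundedness of $\zeta_T$ is equivalent to $T$ being $\pi$-$(p,q)$-dominated, and since the admissible constants $C$ are literally the same on both sides, their infima coincide, yielding $D_{p,q}^\pi(T)=\inf C=\|\zeta_T\|$. The main, and essentially only, obstacle is the conjugate-index bookkeeping verified in the second paragraph; all the analytic content, including the appeal to Ky Fan's lemma, has already been spent in establishing Proposition \ref{link}.
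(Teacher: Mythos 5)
Your proposal is correct and follows exactly the paper's own route: verify that the index bookkeeping for $\alpha_{q^*,p^*}^\pi$ matches that of Definition \ref{dominatedoperators}, identify $\lev\zeta_T,(\pimqi)\tens y_i^*\rev$ with $\lev y_i^*, T(x_i^1,\dots,x_i^n)-T(z_i^1,\dots,z_i^n)\rev$, and invoke Proposition \ref{link}. The only difference is that you spell out the conjugate-exponent checks and the well-definedness of $\zeta_T$ in more detail than the paper does.
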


\begin{proof}
First, notice that $\frac{1}{p}+\frac{1}{q}\leq 1$ implies $\frac{1}{p^*}+\frac{1}{q^*}\geq 1$. Hence, $\alpha_{q^*,p^*}^\pi$ and $D_{p,q}^\pi$ make sense. Moreover, $1=\frac{1}{r}+\frac{1}{p}+\frac{1}{q}$ is valid for $D_{p,q}^\pi$ and $\alpha_{q^*,p^*}^\pi$. Let $(p_i)_{i=1}^m$, $(q_i)_{i=1}^m$ and $(y_i^*)_{i=1}^m$. The proof is complete by noticing that
$$\|( \lev  \zeta_T,   (\pimqi)\tens y_i^*\rev  )_{i=1}^m\|_{r^*} =   \|( \lev   y_i^*  ,  f_T(p_i)-f_T(q_i)  \rev  )_{i=1}^m\|_{r^*}$$
and applying Proposition \ref{link}.
\end{proof}

The implications to multilinear operators of the results obtained in the previous section are reflected in the next theorem.

\begin{theorem}\label{factorization}(Kwapie\'n's Factorization Theorem)
Let $\xxx, Y$  be Banach spaces. The following are equivalent:
\begin{itemize}
	\item [i)]   $T:\xpx\into Y$ is $\pi$-$(p,q)$-dominated.
	\item [ii)]  For any $w^*$-compact norming subsets $K\subset B_{\Lxx}$ and $L\subset B_{Y^{**}}$ there exists a positive constant $C$ such that
	\begin{align}\label{factorization2}
\hspace{1cm}|\lev  y^*   \right.&, \left.  T(x)-T(z) \rev|\nonumber\\
				  &\leq   C\left(\int\limits_K |\psi(x)-\psi(z)|^{p}d\mu(\psi)\right)^{\frac{1}{p}}     \left(\int\limits_L |y^{**}(y^*)|^{q}d\nu(y^{**})\right)^{\frac{1}{q}}
\end{align}
for all $x,z$ in $\xpx$ and $y^*$ in $Y^*$ (the first integral is replaced by $\pi(\tens(x)-\tens (z))$ if $p=\infty$ and the second by $\|y^*\|$ if $q=\infty$).
	\item [ii)]  $T$ factors as follows
\begin{small}
\begin{eqnarray}\label{factorization1}
\begin{array}{c}
\xymatrix{
\xpx\ar[dr]_{A}\ar[rr]^-T &                         & Y \\
                                            &f_A(\sxxp)\ar[ur]_B\ar[d]  &     \\
                                            &         Z              &
}
\end{array},
\end{eqnarray}
\end{small}
where $Z$ is a Banach space, $A:\xxx\into Z$ is a Lipschitz $p$-summing multilinear operator and $B:f_A(\sxxp)\into Y$ is a Lipschitz function whose adjoint linear operator $B^*:Y^*\into (f_A(\sxxp))^*$ is $q$-summing.
\end{itemize}
Under these circumstances $D_{p,q}^\pi(T)=\inf C=\inf \pi_p^{Lip}(A)\,  \pi_q(B^*)$ where the infimums are taken over all $C$ as in \eqref{factorization2} and all possible factorizations as in (\ref{factorization1}), respectively.
\end{theorem}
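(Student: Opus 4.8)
The plan is to prove the three implications (i)$\Rightarrow$(ii)$\Rightarrow$(iii)$\Rightarrow$(i), invoking Section~2 for the first and carrying out explicit constructions for the other two. For (i)$\Leftrightarrow$(ii) there is essentially no new work: by Proposition~\ref{isometry} the operator $T$ is $\pi$-$(p,q)$-dominated precisely when $\zeta_T$ is bounded on $(\xtxtyd,\alpha_{q^*,p^*}^\pi)$, with $D_{p,q}^\pi(T)=\|\zeta_T\|$. I would then apply Theorem~\ref{functionals} to $\zeta_T$, that is, with the parameters $(p,q)$ replaced by $(q^*,p^*)$ and the last factor $Y$ replaced by $Y^*$. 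This turns the exponents $q^*,p^*$ of that statement into $p,q$ and makes the second norming set $L\subset B_{Y^{**}}$ with integrand $|y^{**}(y^*)|^{q}$. Reading $\lev\zeta_T,(a-b)\tens y^*\rev=\lev y^*,T(x)-T(z)\rev$ (with $a=\tens(x)$, $b=\tens(z)$ and $f_\psi(a)=\psi(x)$) yields exactly \eqref{factorization2}, together with the norm identity $D_{p,q}^\pi(T)=\inf C$.

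For (ii)$\Rightarrow$(iii) I would build the factorization from the Pietsch measures. Fixing $\mu,\nu$ as in (ii), let $A\colon\xpx\to L_p(\mu)$ be the multilinear map $A(x)=[\,\psi\mapsto\psi(x)\,]$, obtained by composing the canonical multilinear embedding $\xpx\to C(K)$ with the formal inclusion $C(K)\hookrightarrow L_p(\mu)$. Then
$$\|A(x)-A(z)\|_{L_p(\mu)}=\left(\int\limits_K|\psi(x)-\psi(z)|^{p}\,d\mu(\psi)\right)^{\frac1p},$$
so $A$ satisfies a Pietsch domination with measure $\mu$ and constant $1$, hence is Lipschitz $p$-summing with $\pi_p^{Lip}(A)\leq 1$ (through the characterization behind Corollary~\ref{psumming}). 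Taking $Z=L_p(\mu)$, inequality \eqref{factorization2} shows that $A(x)=A(z)$ forces $T(x)=T(z)$, so $B\colon f_A(\sxxp)\to Y$, $B(A(x)):=T(x)$, is well defined with $B\circ A=T$; the same inequality together with $L\subset B_{Y^{**}}$ bounds its Lipschitz constant and makes $B^*$ available via Lemma~\ref{sigmaimages}. The crucial estimate is the pointwise bound
$$\|B^*(y^*)\|_{(f_A(\sxxp))^*}=\sup_{A(x)\neq A(z)}\frac{|\lev y^*,T(x)-T(z)\rev|}{\|A(x)-A(z)\|}\leq C\left(\int\limits_L|y^{**}(y^*)|^{q}\,d\nu(y^{**})\right)^{\frac1q},$$
which, upon raising to the $q$-th power, summing over a finite family, and pushing the sum inside the integral, gives $\pi_q(B^*)\leq C$. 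Thus $\pi_p^{Lip}(A)\pi_q(B^*)\leq C$, whence $\inf\pi_p^{Lip}(A)\pi_q(B^*)\leq D_{p,q}^\pi(T)$.

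For (iii)$\Rightarrow$(i) I would check Definition~\ref{dominatedoperators} directly. Writing $\lev y_i^*,T(x_i)-T(z_i)\rev=\lev B^*(y_i^*),A(x_i)-A(z_i)\rev$ and using $|\lev B^*(y_i^*),A(x_i)-A(z_i)\rev|\leq\|B^*(y_i^*)\|\,\|A(x_i)-A(z_i)\|$, I would apply H\"older with the conjugate exponents $p/r^*$ and $q/r^*$ (legitimate since $\tfrac1{r^*}=\tfrac1p+\tfrac1q$, so $\tfrac{r^*}{p}+\tfrac{r^*}{q}=1$) to separate the two factors:
$$\left(\sumim|\lev y_i^*,T(x_i)-T(z_i)\rev|^{r^*}\right)^{\frac1{r^*}}\leq\left(\sumim\|A(x_i)-A(z_i)\|^{p}\right)^{\frac1p}\left(\sumim\|B^*(y_i^*)\|^{q}\right)^{\frac1q}.$$
The first factor is bounded by $\pi_p^{Lip}(A)\,\|(\tens(x_i)-\tens(z_i))_i\|_p^{w,\pi}$ because $A$ is Lipschitz $p$-summing, the second by $\pi_q(B^*)\,\|(y_i^*)_i\|_q^w$ because $B^*$ is $q$-summing; this is the dominated inequality with constant $\pi_p^{Lip}(A)\pi_q(B^*)$, so $D_{p,q}^\pi(T)\leq\inf\pi_p^{Lip}(A)\pi_q(B^*)$, and all three quantities coincide.

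The H\"older bookkeeping of (iii)$\Rightarrow$(i) and the translation in (i)$\Leftrightarrow$(ii) are routine. The main obstacle I expect lies in (ii)$\Rightarrow$(iii): verifying that $A$ genuinely qualifies as a Lipschitz $p$-summing \emph{multilinear} operator in the sense of \cite{angulo18} (not merely that its $\Sigma$-operator is Lipschitz $p$-summing), that $f_A(\sxxp)=A(\xpx)$ carries the metric for which $B$ and $B^*$ behave as in Lemma~\ref{sigmaimages}, and that the degenerate cases $p=\infty$ (where the first factor is replaced by $\pi(\tens(x)-\tens(z))$) and $q=\infty$ (where the second integral becomes $\|y^*\|$) are handled consistently with Theorem~\ref{functionals}.
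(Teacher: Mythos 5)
Your proposal is correct and follows essentially the same route as the paper's proof: Proposition~\ref{isometry} plus Theorem~\ref{functionals} for (i)$\Leftrightarrow$(ii), the evaluation map into $L_p(\mu)$ with $B$ defined on $f_A(\sxxp)$ and a Pietsch-domination argument for $B^*$ in (ii)$\Rightarrow$(iii), and the same H\"older estimate with exponents $p/r^*$ and $q/r^*$ for (iii)$\Rightarrow$(i). The only cosmetic difference is that you unwind the ``domination implies $q$-summing'' step for $B^*$ by hand where the paper simply cites the Pietsch Domination Theorem, and the paper settles your worry about $A$ by invoking \cite[Proposition 2.5]{angulo18}.
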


\begin{proof}
(i)$\Rightarrow$(ii): Proposition \ref{isometry} asserts that $\zeta_T$ is a bounded functional on the normed space  $\left(\xtxtyd,\alpha_{q^*,p^*}^\pi\right)$. By Theorem~\ref{functionals} there exist measures $\mu$, $\nu$ on $K$ and $L$ respectively such that
\begin{align*}
|\lev\right.  y^*,  f_T&(a)-f_T(b) \left.\rev| = |\lev \zeta ,  (a-b)\tens y^* \rev  | \\
				  &\leq   D_{p,q}^\pi(T)\,\left(\int\limits_K |f_\psi(a)-f_\psi(b)|^{p}d\mu(\psi)\right)^{\frac{1}{p}}     \,\left(\int\limits_L |y^{**}(y^*)|^{q}d\nu(y^{**})\right)^{\frac{1}{q}}.
\end{align*}

(ii)$\Rightarrow$(iii): Define
\begin{eqnarray*}
A:\xpx        &\into &	    L_p(\mu)\\
	\xxp	  &\mapsto &  j_{p}\iota\xxp,
\end{eqnarray*}
where $\iota:\xpx\into C(K)$ acts by evaluation on $K$ and $j_{p}:C(K)\into L_{p}(\mu)$ is the canonical map. Plainly, $A$ is multilinear and bounded since $j_p$ is linear and bounded and $\iota$ is multilinear and bounded. Even more, $A$ is Lipschitz $p$-summing  and $\pi_{p}^{Lip}(A)\leq \|\iota\|\pi_{p}(j_{p})=1$ since $j_p$ is $p$-summing and $\mu$ is a probability measure (see \cite[Proposition 2.5]{angulo18}).

On the other hand, define
\begin{eqnarray*}
B:f_A(\sxxp) 	 &\into &	       Y\\
     f_A(a)      &\mapsto &  f_T(a).
\end{eqnarray*}
Inequality \eqref{factorization2} ensures that $B$ is well defined and
$$|  \lev B^*y^*  ,   f_A(a)-f_A(b) \rev|\leq C  \|f_A(a)-f_A(b)\|_{L_p(\mu)}  \left(\int\limits_L |y^{**}(y^*)|^{q}d\nu(y^{**})\right)^{\frac{1}{q}}.$$
Hence, $B^*(y^*)$ is a Lipschitz function and
$$Lip(B^*(y^*))\leq C\,  \left(\int\limits_L |y^{**}(y^*)|^{q}d\nu(y^{**})\right)^{\frac{1}{q}}.$$
The Pietsch's Domination Theorem asserts that $B^*:Y^*\into (f_A(\sxxp))^*$ is a linear $q$-summing operator with $\pi_{q}(B^*)\leq C$. This way, $T=BA$ and $\pi_p^{Lip}(A)  \pi_q(B^*)\leq C$.

(iii)$\Rightarrow$(i): Let $A$ and $B$ as in \eqref{factorization1}. H\"{o}lder's inequality applied to he conjugate indexes $\frac{p}{r^*}$ and $\frac{q}{r^*}$ implies
\begin{small}
\begin{align*}
\| ( \lev  y_i^*,   f_T(p_i)-f_T(q_i) \rev  )_{i=1}^m\|_{r^*}&= \|\left(B^*y_i^* f_A(p_i)-  B^*y_i^*f_A(q_i)  \right)_{i=1}^m\|_{r^*}\\
								&=\left(\sumim| B^*y_i^* (f_A(p_i))-  B^*y_i^*(f_A(q_i)) |^{r^*} \right)^{\frac{1}{r^*}}\\
								&\leq\left(\sumim Lip(B^*y_i^*)^{r^*}\,   \|f_A(p_i)-f_A(q_i)\|^{r^*} \right)^{\frac{1}{r^*}}\\
								&\leq\left(\sumim Lip(B^*y_i^*)^{q}\right)^{\frac{1}{q}}\left(\sumim \|f_A(p_i)-f_A(q_i)\|^{p} \right)^{\frac{1}{p}}\\
								&\leq 	\pi_{q}(B^*)  \pi_{p}^{Lip}(A)  \|(\pimqi)\|_{p}^{w,\pi}  \|(y_i^*)\|_{q}^w.
\end{align*}
\end{small}
Hence $T$ is $\pi$-$(p,q)$-dominated and $D_{p,q}^\pi(T) \leq \pi_{p}^{Lip}(A)\pi_{q}(B^*) $.
\end{proof}

As well as in the case of dominated linear operators, the cases $q=\infty,p^*$ are of particular interest. The case $q=p^*$ is a generalization of the $p$-dominated linear operators originally studied by Kwapie\'n in  \cite{kwapien72}. A good exposition of the results of Kwapie\'n can be found in the monograph \cite[Chapter 9]{diestel95}. The case $q=\infty$ is detailed next.

\begin{corollary}\label{psumming}
Let $\Txxy$ be a bounded multilinear operator. Then, $T$ is $\pi$-$(p,\infty)$-dominated if and only if $T$ is Lipschitz $p$-summing. In this case, $\pi_p^{Lip}(T)=D_{p,\infty}^\pi(T)$.
\end{corollary}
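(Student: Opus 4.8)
The plan is to read the equivalence directly off the two defining inequalities, since both sides reduce to essentially the same expression once the parameters are specialized. First I would fix the indices: with $q=\infty$ the relation $1=\frac{1}{r}+\frac{1}{p}+\frac{1}{q}$ forces $r=p^*$, hence $r^*=p$, and the weak $\infty$-norm collapses to $\|(y_i^*)_{i=1}^m\|_\infty^w=\sup_{1\le i\le m}\|y_i^*\|$. With these substitutions the $\pi$-$(p,\infty)$-domination inequality of Definition \ref{dominatedoperators} reads
$$\left(\sumim |\lev y_i^*, f_T(p_i)-f_T(q_i)\rev|^p\right)^{\frac1p}\le C\,\|(\pimqi)_{i=1}^m\|_p^{w,\pi}\,\sup_{1\le i\le m}\|y_i^*\|,$$
where $p_i=x_i^1\tens\dots\tens x_i^n$ and $q_i=z_i^1\tens\dots\tens z_i^n$ lie in $\sxxp$, while the Lipschitz $p$-summing inequality for $f_T$ reads $(\sumim\|f_T(p_i)-f_T(q_i)\|^p)^{1/p}\le C\,\|(\pimqi)_{i=1}^m\|_p^{w,\pi}$. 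So the task reduces to passing between the scalar quantities $|\lev y_i^*,\cdot\rev|$ and the norms $\|\cdot\|$ in $Y$.

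For the implication from $\pi$-$(p,\infty)$-dominated to Lipschitz $p$-summing, I would, for each $i$, invoke Hahn--Banach to choose $y_i^*\in B_{Y^*}$ with $\lev y_i^*, f_T(p_i)-f_T(q_i)\rev=\|f_T(p_i)-f_T(q_i)\|$ (taking $y_i^*=0$ when the difference vanishes). Then $\sup_i\|y_i^*\|\le1$, so the left-hand side of the domination inequality becomes exactly $(\sumim\|f_T(p_i)-f_T(q_i)\|^p)^{1/p}$, which gives the Lipschitz $p$-summing inequality with the same constant $C$; taking infima yields $\pi_p^{Lip}(T)\le D_{p,\infty}^\pi(T)$. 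For the converse I would use only the trivial duality bound $|\lev y_i^*, f_T(p_i)-f_T(q_i)\rev|\le\|y_i^*\|\,\|f_T(p_i)-f_T(q_i)\|\le(\sup_j\|y_j^*\|)\,\|f_T(p_i)-f_T(q_i)\|$; summing the $p$-th powers and applying the Lipschitz $p$-summing inequality for $f_T$ recovers the domination inequality with $\sup_j\|y_j^*\|=\|(y_i^*)_{i=1}^m\|_\infty^w$ on the right, so $D_{p,\infty}^\pi(T)\le\pi_p^{Lip}(T)$. Combining the two estimates gives both the equivalence and the equality of constants.

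There is no serious obstacle here; the argument is a straightforward specialization of the defining inequalities. The only points needing care are the arithmetic identity $r^*=p$ when $q=\infty$, the identification of the weak $\infty$-norm with the supremum of the individual norms, and the harmless degenerate case $f_T(p_i)=f_T(q_i)$ in the Hahn--Banach step. One could alternatively deduce the result from Theorem \ref{factorization} by observing that for $q=\infty$ the adjoint $B^*$ is $\infty$-summing precisely when it is bounded, so the factorization collapses onto $A$ itself; but the direct comparison of the two inequalities is shorter and delivers the constant $\pi_p^{Lip}(T)=D_{p,\infty}^\pi(T)$ immediately.
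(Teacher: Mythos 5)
Your proof is correct, but it follows a genuinely different route from the paper. The paper derives Corollary \ref{psumming} as a consequence of the Kwapie\'n Factorization Theorem (Theorem \ref{factorization}): for the forward implication it factors $T=BA$ as in \eqref{factorization1}, notes that for $q=\infty$ the $\infty$-summability of $B^*$ is just boundedness, and uses the stability of the Lipschitz $p$-summing property under such compositions to get $\pi_p^{Lip}(T)=\pi_p^{Lip}(BA)\leq \pi_p^{Lip}(A)\,\|B^*\|$, whence $\pi_p^{Lip}(T)\leq D_{p,\infty}^\pi(T)$ after taking the infimum over factorizations; for the converse it writes $T=IT$ with $I$ the inclusion of $f_T(\sxxp)$ into $Y$ as a trivial factorization. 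You instead compare the two defining inequalities directly: after checking $r^*=p$ and $\|(y_i^*)_{i=1}^m\|_\infty^w=\sup_i\|y_i^*\|$, a Hahn--Banach choice of norming functionals turns the $(p,\infty)$-domination inequality of Definition \ref{dominatedoperators} into the Lipschitz $p$-summing inequality with the same constant, and the trivial duality bound gives the reverse estimate. Your argument is more elementary and self-contained --- it does not invoke Theorem \ref{factorization} nor the composition stability of Lipschitz $p$-summing maps, and it yields the equality $\pi_p^{Lip}(T)=D_{p,\infty}^\pi(T)$ without passing through an infimum over factorizations --- while the paper's route has the advantage of exhibiting the corollary as the degenerate case of the factorization picture, which is the organizing theme of that section. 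The only points to keep explicit in your write-up are the degenerate choice $y_i^*=0$ when $f_T(p_i)=f_T(q_i)$ (which you already note) and the usual $\sup$ convention for the $\ell_p$ expressions when $p=\infty$; neither causes any difficulty.
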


\begin{proof}
First recall that for linear operators the properties $\infty$-summing and continuity are the same and $\pi_\infty(\cdot)=\|\cdot\|$. The Lipschitz $p$-summing property is preserved by compositions as in \eqref{factorization1}. Hence, by (iii) of previous theorem, $\pi$-$(p,\infty)$-domination implies Lipschitz $p$-summability and $\pi_p^{Lip}(T)= \pi_p^{Lip}(BA)\leq \pi_p^{Lip}(A)\|B^*\|$ for any factorization of $T$ as in \eqref{factorization1}. Then, $\pi_p(T)\leq D_{p,\infty}^\pi(T)$. Conversely, if $T$ is Lipschitz $p$-summing, then $T=IT$ is $\pi$-$(p,\infty)$-dominated, where $I$ is the inclusion of $f_T(\sxxp)$ into $Y$. Moreover, $D_{p,\infty}^\pi(T)\leq\pi_p(T)$.
\end{proof}

Let $\mathcal{D}_{p,q}^\pi(\xxx;Y)$ be the set of all $\pi$-$(p,q)$-dominated multilinear operators from $\xpx$ to $Y$. Proposition \ref{isometry} says that $\mathcal{D}_{p,q}^\pi(\xxx;Y)$ is a vector space if we endow it with the sum and multiplication by scalars defined pointwise. Even more, $D_{p,q}^\pi$ is a norm on it.

\begin{proposition}\label{complete}
$\mathcal{D}_{p,q}^\pi(\xxx;Y)$ is a Banach space.
\end{proposition}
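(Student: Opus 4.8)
The plan is to show that $\mathcal{D}_{p,q}^\pi(\xxx;Y)$ is complete by exploiting the isometric identification from Proposition~\ref{isometry}. That proposition tells us that $T \mapsto \zeta_T$ is an isometry from $\mathcal{D}_{p,q}^\pi(\xxx;Y)$, equipped with $D_{p,q}^\pi$, into the dual Banach space $\bigl(\xtxtyd,\alpha_{q^*,p^*}^\pi\bigr)^*$. Since the dual of any normed space is complete, the natural strategy is to prove that the image of this isometry is a \emph{closed} subspace of that dual, and then conclude that $\mathcal{D}_{p,q}^\pi(\xxx;Y)$, being isometrically isomorphic to a closed subspace of a Banach space, is itself complete.

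Concretely, I would first take a $D_{p,q}^\pi$-Cauchy sequence $(T_k)$ in $\mathcal{D}_{p,q}^\pi(\xxx;Y)$. By the isometry, $(\zeta_{T_k})$ is Cauchy in the dual space, hence converges in norm to some bounded functional $\zeta$ on $\bigl(\xtxtyd,\alpha_{q^*,p^*}^\pi\bigr)$. The crux is to identify $\zeta$ as $\zeta_T$ for an actual multilinear operator $T$. To do this I would define $T$ pointwise by setting $\langle y^*, T\xxp\rangle := \zeta(\xxt\tens y^*)$ for each $\xxp$ and each $y^*\in Y^*$; linearity of $\zeta$ on the algebraic tensor product forces this formula to be multilinear in the $x^i$ and linear in $y^*$, so it determines a well-defined element $T\xxp\in Y^{**}$, and one checks that $T\xxp$ in fact lands in $Y$ (for instance because each $T_k\xxp\to T\xxp$ in the appropriate sense, $Y$ being closed in $Y^{**}$). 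The boundedness of $T$ as a multilinear operator follows because $\pi$-$(p,q)$-domination implies ordinary boundedness, which one can read off from the domination inequality with $m=1$.

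Once $T$ is constructed, the identity $\zeta = \zeta_T$ holds on decomposable tensors by definition and hence on all of $\xtxtyd$ by linearity and density, so $\zeta_T = \zeta$ is bounded; by Proposition~\ref{isometry} this means $T\in\mathcal{D}_{p,q}^\pi(\xxx;Y)$ with $D_{p,q}^\pi(T)=\|\zeta_T\|=\|\zeta\|$. Finally $D_{p,q}^\pi(T_k - T)=\|\zeta_{T_k}-\zeta_T\|\to 0$ gives convergence of $(T_k)$ in the norm $D_{p,q}^\pi$, establishing completeness.

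I expect the main obstacle to be the verification that the limiting functional $\zeta$ genuinely arises from a multilinear operator valued in $Y$ (not merely in $Y^{**}$), that is, the identification of the closed image of the isometry with $\mathcal{D}_{p,q}^\pi(\xxx;Y)$. The cleanest way to handle this is to observe that norm convergence $\zeta_{T_k}\to\zeta$ forces, for each fixed tuple $\xxp$, the vectors $T_k\xxp$ to form a Cauchy sequence in $Y$ (by testing against $y^*\in B_{Y^*}$ and using that $\alpha_{q^*,p^*}^\pi(\xxt\tens y^*)$ is controlled by the norms involved), whose limit defines $T\xxp\in Y$ directly, sidestepping any delicate bidual argument. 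All remaining steps—multilinearity of the limit, boundedness, and the norm equality—are then routine consequences of the isometry and the pointwise convergence.
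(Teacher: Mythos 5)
Your proposal is correct and follows essentially the same route as the paper: both use the isometry $T\mapsto\zeta_T$ of Proposition \ref{isometry} into the dual of $(\xtxtyd,\alpha_{q^*,p^*}^\pi)$, obtain the limit functional $\zeta$ there, and identify it with $\zeta_T$ where $T$ is the pointwise limit of the $T_k$ in $Y$, guaranteed by the inequality $\|T_k-T_j\|\leq D_{p,q}^\pi(T_k-T_j)$ (your ``cleanest way'' in the last paragraph is exactly the paper's opening observation, which lets it bypass the bidual detour you sketch in the middle).
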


\begin{proof}
First, notice that if $T$ is $\pi$-$(p,q)$-dominated then
$$|\lev  y^*  ,  T\xxp\rev|\leq D_{p,q}^\pi(T)\|x^1\|\dots\|x^n\|\|y^*\|$$
holds for all $\xxp$ in $\xpx$ and $y^*$ in $Y^*$. Hence $\|T\|\leq D_{p,q}^\pi(T)$.

Let $(T_k)_{k}$ be a Cauchy sequence in $\mathcal{D}_{p,q}^\pi(\xxx;Y)$. By the comments above there exist a bounded multilinear operator $\Txxy$ such that $(T_k)_k$ converges pointwise to $T$. By Proposition \ref{isometry}, $(\zeta_{T_k})_k$ is a Cauchy sequence in $(\xtxtyd,\apq^\pi)^*$. Then, there exists $\zeta$ in $(\xtxtyd,\apq^\pi)^*$ such that $(\zeta_{T_k})_k$ converges to $\zeta$. We finish the proof by showing that $\zeta=\zeta_T$. This is clear since
$$\zeta (\xxt\tens y^*) =\lim\limits_{k\rightarrow\infty}\zeta_{T_k}(\xxt\tens y^*)=\lev  y^*  ,  T\xxp\rev$$
holds for all $y^*$ in $Y^*$ and $\xxp$ in $\xpx$.
\end{proof}


\section{Maximal Ideal Behavior of the Class $\mathcal{D}_{p,q}$ }

Consider Banach spaces $\xxx$ and subspaces $E_i$ of $X_i$ for $1\leq i \leq n$. Recall that, in general, $E_1\hat{\tens}_\pi\dots\hat{\tens}_\pi E_n$ is not a subspace of $\xtxp$. As a consequence, the $\pi$-$(p,q)$-dominated norm of $T:\xpx\into Y$ might not be compatible with the $\pi$-$(p,q)$-dominated norm of the restriction of $T$ to $\epe$. For a well behavior we extend our definition of dominated operators regarding any reasonable crossnorm $\beta$ on $\xtx$.

A norm $\beta$ on $\xtx$ is named a reasonable crossnorm if
$$\varepsilon(u)\leq\beta(u)\leq\pi(u)\qquad\forall \, u\in \xtx,$$
where $\pi$ and $\varepsilon$ denote the projective and injective tensor norms respectively. Here, $\varepsilon(u)=\sup\{ |x_1^*\tens\dots\tens x_n^* (u)| \,|\,  x_i^*\in B_{X_i^*}, 1\leq i \leq n \}$ for all $u$ in $\xtx$. For a theory of tensor norms of this type, the reader may check \cite{floret_hunfeld01}. In the sequel, $\beta$ denotes a reasonable crossnorm on $\xtx$ and $\xtxb$ is the resulting normed space. Let $\Lbxx$ denote the Banach space of all multilinear forms $\fhi:\xpx\into \kk$, whose linearization $\tilde{\fhi}:\xtxb\into \kk$ is bounded, endowed with the norm $\|\fhi\|_\beta:=\|\widetilde{\fhi}\|$. Let $\|\cdot\|_{p}^{w,\beta}$ denote the weak $p$-norm of weak $p$-summable sequences in $\xtxb$.

\subsection{Local Behavior of $(p,q)$-Dominated Multilinear Operators}

\begin{definition}\label{betalaprestenorms}
Let $1\leq p,q\leq \infty$ such that $\frac{1}{p}+\frac{1}{q}\geq1$. Take the unique $r\in[1,\infty]$ with the property $1=\frac{1}{r}+\frac{1}{q^*}+\frac{1}{p^*}$. Let $\xxx$ and $Y$ be Banach spaces and $\beta$ be a reasonable crossnorm on $\xtx$. We define the Laprest\'e norm $\apqb$ on $\xtxty$ by
$$\apqb(u):=\inf \left\{ \|(\lambda_i)_{i=1}^m\|_r \|(\pimqi)_{i=1}^m\|_{q^*}^{w,\beta} \|(y_i)_{i=1}^m\|_{p^*}^w  \Big|  u=\rlu\right\}.$$
\end{definition}

\begin{definition}\label{betadominatedoperators}
Let $1\leq p,q \leq \infty$ such that $\frac{1}{p}+\frac{1}{q}\leq 1$. Take the unique $r\in[1,\infty]$ such that $1=\frac{1}{r}+\frac{1}{p}+\frac{1}{q}$. The  multilinear operator $T:\xpx\into Y$ is called $\beta$-$(p,q)$-dominated if there exists a constant $C>0$ such that
\begin{align*}
  \|(\lev y_i^*  ,  T(x_i^1,\dots, x_i^n)\right.-&\left. T(z_i^1,\dots, z_i^n)  \rev)_{i=1}^m\|_{r^*}\\
	                                                &\leq C\|(x_i^1\tens\dots\tens x_i^n-z_i^1\tens\dots\tens z_i^n)_{i=1}^m\|_{p}^{w,\beta}  \|(y_i^*)_{i=1}^m\|_{q}^w
\end{align*}
holds for all finite sequences $(x_i^1,\dots, x_i^n)_{i=1}^m$, $(z_i^1,\dots, z_i^n)_{i=1}^m$ in $\xpx$ and $(y_i^*)_{i=1}^m$ in $Y^*$. Define $D_{p,q}^\beta(T)$ as the infimum of the constants $C$ as above.
\end{definition}

All results of $\pi$-$(p,q)$-dominated multilinear operators and the Laprest\'e tensor norm $\apq^\pi$ are valid if we replace $\pi$ by $\beta$. This is, the $\beta$-$(p,q)$-dominated operators are in duality with the Laprest\'e tensor norms $\apq^\beta$ (Proposition \ref{isometry}); The $\beta$-$(p,q)$-dominated operators are characterize in terms of dominations (where $\Lxx$ is replaced by $\Lbxx$) and factorizations (where Lipschitz $p$-summing operators are replaced by $\beta$-Lipschitz $p$-summing multilinear operators, see Theorem \ref{factorization}). The proofs of these results work exactly as in previous sections. For avoiding repetitions we do not explicit announce them; although, we make use of them in this more general version.

It is not difficult to see that if  $E_i$ is a closed subspace of $X_i$, $1\leq i\leq n$, then the restriction of $\beta$ to $\ete$ is a reasonable crossnorm. This restriction is denoted by $\br$. An application of the Hahn-Banach theorem shows that for every pair of sequences $(p_i)_{i=1}^m$ and $(q_i)_{i=1}^m$ in $\Sigma_{\eee}$ we have
\begin{equation*}
\sup\limits_{\fhi\in B_{\Lbxx}}\left(\sumim |\fhi(p_i)-\fhi(q_i)|^p\right)^{\frac{1}{p}} = \sup\limits_{\phi\in B_{\mathcal{L}^{\br}(\eee)}} \left(\sumim |\phi(p_i)-\phi(q_i)|^p\right)^{\frac{1}{p}},
\end{equation*}
in other words
\begin{equation}\label{key}
\|(\pimqi)_{i=1}^m;\xtx\|_p^{w,\beta}= \|(\pimqi)_{i=1}^m;\ete\|_p^{w,\br}.
\end{equation}

\begin{proposition}\label{finitelygenerated}
Let $\xxx$, $Y$ be Banach spaces and $\beta$ be a reasonable crossnorm on $\xtx$. Then
$$\apqb(u; \xxx,Y)=\inf \apq^{\br}(u; \eee,F),$$
where the infimum is taken over all finite dimensional subspaces $E_i$ and $F$ of $X_i$, $1\leq i\leq n$ and $Y$, respectively such that $\etetf$ contains $u$.
\end{proposition}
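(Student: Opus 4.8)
The plan is to prove the two inequalities separately, both resting on the locality relation \eqref{key} together with its one-factor analogue for $Y$. Throughout, the scalar factor $\|(\lambda_i)_{i=1}^m\|_r$ is untouched by any passage to a subspace, so only the two weak norms require attention.

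For the inequality $\apqb(u;\xxx,Y)\le \inf \apq^{\br}(u;\eee,F)$, I would fix finite dimensional subspaces $E_i\subset X_i$ and $F\subset Y$ with $u\in\etetf$ and observe that every admissible representation $u=\rlu$ with $p_i,q_i\in\Sigma_{\eee}$ and $y_i\in F$ is, a fortiori, an admissible representation of $u$ in $\xtxty$. By \eqref{key} the factor $\|(\pimqi)_{i=1}^m;\ete\|_{q^*}^{w,\br}$ equals $\|(\pimqi)_{i=1}^m;\xtx\|_{q^*}^{w,\beta}$, and by the Hahn--Banach theorem (the linear, one-factor instance of the same argument that yields \eqref{key}) the factor $\|(y_i)_{i=1}^m\|_{p^*}^w$ computed in $F$ equals the one computed in $Y$. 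Hence each quantity minimized in $\apq^{\br}(u;\eee,F)$ is admissible, with the same value, for $\apqb(u;\xxx,Y)$; so $\apqb(u;\xxx,Y)\le\apq^{\br}(u;\eee,F)$, and taking the infimum over all such $E_i,F$ gives this half.

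For the reverse inequality, given $\varepsilon>0$ I would choose a representation $u=\rlu$ in $\xtxty$ with $p_i,q_i\in\sxx$ and $y_i\in Y$ for which $\|(\lambda_i)_{i=1}^m\|_r\,\|(\pimqi)_{i=1}^m\|_{q^*}^{w,\beta}\,\|(y_i)_{i=1}^m\|_{p^*}^w\le \apqb(u;\xxx,Y)+\varepsilon$. Writing each decomposable tensor as $p_i=x_i^1\tens\dots\tens x_i^n$ and $q_i=z_i^1\tens\dots\tens z_i^n$, I set $E_j:=\mathrm{span}\{x_i^j,z_i^j:\,1\le i\le m\}$ and $F:=\mathrm{span}\{y_i:\,1\le i\le m\}$. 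These are finite dimensional, and by construction $p_i,q_i\in\Sigma_{\eee}$, $y_i\in F$ and $u\in\etetf$, so the very same representation is admissible for $\apq^{\br}(u;\eee,F)$. Applying \eqref{key} and the one-factor Hahn--Banach identity exactly as above leaves the two weak norms unchanged, whence $\apq^{\br}(u;\eee,F)\le \apqb(u;\xxx,Y)+\varepsilon$; letting $\varepsilon\to 0$ finishes the proof.

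The argument is essentially bookkeeping once \eqref{key} is in hand: the only genuine point is the locality of the weak $\beta$-norm under passage to a subspace, which is precisely \eqref{key} and was already secured by a Hahn--Banach extension argument. The minor additional input is the matching identity $\|(y_i)_{i=1}^m;F\|_{p^*}^w=\|(y_i)_{i=1}^m;Y\|_{p^*}^w$ for $y_i\in F$, valid because restriction maps $B_{Y^*}$ onto $B_{F^*}$. I do not expect any compactness or Ky Fan type input to be needed here, since the whole difficulty has been concentrated into \eqref{key}; the subtlety that $E_1\hat{\tens}_\pi\dots\hat{\tens}_\pi E_n$ need not embed isometrically into $\xtxp$ is exactly what \eqref{key} circumvents, by working with the restricted crossnorm $\br$ rather than the intrinsic tensor norm of the subspaces.
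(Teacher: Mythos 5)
Your proposal is correct and follows essentially the same route as the paper: both directions reduce to the locality identity \eqref{key} together with the elementary fact that $\|(y_i)_{i=1}^m;F\|_{p^*}^w=\|(y_i)_{i=1}^m;Y\|_{p^*}^w$, with the reverse inequality obtained by spanning the entries of a near-optimal representation to produce the finite dimensional subspaces. Your write-up merely makes explicit two points the paper leaves implicit (the span construction of $E_j$ and $F$, and the one-factor Hahn--Banach identity in the first direction), so there is nothing to add.
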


\begin{proof}
Let $E_i$ and $F$ as above. Let $u$ in $\etetf$ and $\sum_{i=1}^m \lambda_i (\pimqi)\tens y_i$ be a representation of $u$ in $\etetf$. Hence, \eqref{key} implies
$$\apqb(u;\xxx,Y)\leq\inf\apq^\br(u;\eee,F).$$

On the other hand, for any $u$ in $\xtxty$ and $\eta>0$ there exists a representation $\sum_{i=1}^m \lambda_i (\pimqi)\tens y_i$ of $u$ such that
$$\|(\lambda_i)_{i=1}^m\|_r\|(\pimqi)_{i=1}^m\|_{q^*}^{w,\beta}\|(y_i)_{i=1}^m\|_{p^*}^w \leq \apqb(u;\xtxty)+\eta.$$

It is clear that there exist finite dimensional subspaces $E_i$ and $F$ of $X_i$ and $Y$ respectively such that $p_i,q_i\in\Sigma_{\eee}^{\br}$ and $y_i\in F$ for $1\leq i\leq m$. Hence \eqref{key} and $\|(y_i)_{i=1}^m;F\|_{p^*}^w=\|(y_i)_{i=1}^m\|_{p^*}^w$ assert
$$\apq^\br(u;\eee, F)\leq \apqb(u;\xxx, Y)+\eta.$$
Therefore $\inf\apq^\br(u;  \eee, F)\leq \apqb(u;\xxx, Y)$.
\end{proof}

Let $E_i$ be a closed subspace of $X_i$ for $1\leq i\leq n$. Let
\begin{eqnarray*}
I_{\eee}:\epe  &\into       & \xtxb\\
                \xxp &\mapsto & \xxt.
\end{eqnarray*}
If $L$ is a closed subspace of $Y$, $Q_L:Y\into Y/L$ denote the canonical quotient linear map. Notice that the composition
$$Q_L f_T I_{\eee}:\epe\into Y/L$$
makes sense since the image of $I_{\eee}$ is contained in $\sxx$.
\begin{theorem}\label{maximal}
Let $\xxx$ be Banach spaces and $\beta$ be a reasonable crossnorm on $\xtx$. The following conditions are equivalent:
\begin{itemize}
\item[i)] $T:\xpx\into Y$ is $\beta$-$(p,q)$-dominated.
\item[ii)] There exist a constant $C>0$ such that for all finite dimensional subspace $E_i$ of $X_i$, $1\leq i\leq n$ and all finite codimensional subspace $L$ of $Y$ it is verified $D_{p,q}^\br(Q_L T I_{\eee})\leq C$.
\end{itemize}
In this situation $D_{p,q}^\beta(T)=\sup D_{p,q}^\br(Q_L f_T I_{\eee})$ where the suprema is taken over all $E_i$ and $L$ as in (ii).
\end{theorem}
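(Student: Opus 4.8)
The plan is to establish the two inequalities $D_{p,q}^\beta(T)\ge \sup D_{p,q}^{\br}(Q_L f_T I_{\eee})$ and $D_{p,q}^\beta(T)\le \sup D_{p,q}^{\br}(Q_L f_T I_{\eee})$ separately, the first giving (i)$\Rightarrow$(ii) and the second (ii)$\Rightarrow$(i), after which the displayed norm equality and the equivalence follow at once. Both directions rest on two localization facts. The first is the already-recorded identity \eqref{key}, which says that the weak $p$-norm of a sequence of differences of decomposable tensors coming from $\Sigma_{\eee}$ is unchanged whether it is measured in $\xtxb$ or in $(\ete,\br)$. The second is the dual counterpart on the $Y$-side: if $L$ is a closed finite-codimensional subspace of $Y$, then $(Y/L)^*$ is isometrically the annihilator $L^\perp\subseteq Y^*$, and a Hahn--Banach extension argument shows that the weak $q$-norm of a sequence lying in $L^\perp$ is the same whether computed in $(Y/L)^*$ or in $Y^*$. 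Throughout I use that $I_{\eee}$ has image in the Segre cone, so that $f_T I_{\eee}$ coincides with the restriction of $T$ to $\epe$ and $Q_L f_T I_{\eee}=Q_L T I_{\eee}$.

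For (i)$\Rightarrow$(ii), fix finite-dimensional $E_i\subseteq X_i$ and finite-codimensional $L\subseteq Y$. I would take arbitrary finite test sequences in $\epe$ and in $(Y/L)^*$, identify the latter with functionals $y^*\in L^\perp\subseteq Y^*$, and feed them into the defining inequality of Definition~\ref{betadominatedoperators} for $T$. Since $\langle \bar{y}^*, Q_L T\xxp\rangle=\langle y^*, T\xxp\rangle$ for the corresponding $y^*$, the left-hand $\ell_{r^*}$-norms match; using \eqref{key} for the first factor on the right and the weak $q$-norm invariance for the second, the domination inequality for $Q_L T I_{\eee}$ emerges with the same constant $D_{p,q}^\beta(T)$. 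Hence $D_{p,q}^{\br}(Q_L T I_{\eee})\le D_{p,q}^\beta(T)$ for every admissible pair $(E_i,L)$, and taking the supremum gives the first inequality; in particular (ii) holds with $C=D_{p,q}^\beta(T)$.

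For (ii)$\Rightarrow$(i), the decisive step is to reduce an arbitrary test of the domination inequality to a single admissible pair $(E_i,L)$. Given finite sequences $(x_i^1,\dots,x_i^n)$, $(z_i^1,\dots,z_i^n)$ in $\xpx$ and $(y_i^*)_{i=1}^m$ in $Y^*$, I would choose each $E_j$ finite-dimensional containing all coordinates $x_i^j,z_i^j$ that occur, and set $L=\bigcap_{i=1}^m \ker y_i^*$, which is closed of finite codimension. Then every $y_i^*$ factors as $\bar{y}_i^*\circ Q_L$ with $\bar{y}_i^*\in (Y/L)^*$, so the entire test data is an admissible test for $Q_L T I_{\eee}$. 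Applying the hypothesis $D_{p,q}^{\br}(Q_L T I_{\eee})\le C$ and then converting the two right-hand norms back to $\xtxb$ and to $Y^*$ via \eqref{key} and the weak $q$-norm invariance yields the defining inequality for $T$ with constant $C$. Therefore $T$ is $\beta$-$(p,q)$-dominated and $D_{p,q}^\beta(T)\le \sup D_{p,q}^{\br}(Q_L f_T I_{\eee})$.

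The main obstacle, and the only place where genuine care is needed, is the passage across the quotient on the $Y$-side: one must verify both that finitely many functionals are captured by a single finite-codimensional subspace (via $L=\bigcap_i\ker y_i^*$) and that no distortion of the weak $q$-norm occurs under the isometric identification $(Y/L)^*=L^\perp$. Everything else amounts to matching the left-hand $\ell_{r^*}$-norms and invoking \eqref{key}; the symmetry between the two directions means that the same computation, with the inequalities read in opposite senses, settles both the equivalence and the norm equality.
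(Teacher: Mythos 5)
Your argument is correct, and while the direction (i)$\Rightarrow$(ii) is essentially the paper's own computation (pull $z_i^*$ back through $Q_L^*$, apply the defining inequality for $T$, and use \eqref{key} together with $\|Q_L^*\|\leq 1$), your proof of (ii)$\Rightarrow$(i) takes a genuinely different and more elementary route. The paper passes through the tensorial duality of Proposition \ref{isometry}: it picks a representation of a tensor $u$, invokes the finitely generated property of $\alpha_{q^*,p^*}^\beta$ (Proposition \ref{finitelygenerated}) to localize $u$ into some $\etetf$ with $F\subset Y^*$ finite dimensional, chooses $L$ so that $(Y/L)^*\cong F$ via $Q_L^*$, and bounds $\zeta_T(u)$ by $C(1+\eta)\alpha_{q^*,p^*}^\beta(u)$. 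You instead stay entirely at the level of the sequential definition: given a finite test, you capture the vectors in finite dimensional $E_j$ and the functionals in $L^\perp$ for $L=\bigcap_i\ker y_i^*$, and transport the inequality back via \eqref{key} and the identification $(Y/L)^*\cong L^\perp$. The one point you flag correctly deserves care and does go through: since $Q_L$ maps the open unit ball of $Y$ onto that of $Y/L$ (and, by Goldstine, the weak $q$-norm of a finite sequence in a dual space may be computed against the predual ball), one gets $\|(\bar y_i^*)_{i};(Y/L)^*\|_q^w=\|(y_i^*)_i;Y^*\|_q^w$ exactly, not merely an inequality. What each approach buys: the paper's route reuses machinery it needs anyway (the duality with the Laprest\'e norm and its finitely generated behavior, which together yield the ``maximal ideal'' picture), whereas yours is self-contained, avoids Propositions \ref{isometry} and \ref{finitelygenerated} entirely, and makes transparent that the local behavior of $D_{p,q}^\beta$ is nothing more than the finiteness of the test data in Definition \ref{betadominatedoperators}.
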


\begin{proof}
Suppose that $T$ is $\beta$-$(p,q)$-dominated. Let $E_i$ and $L$ as in (ii). Let $(p_i)_{i=1}^m$ and $(q_i)_{i=1}^m$ in $\Sigma_{\eee}^\br$ and $(z_i^*)_{i=1}^m$ in $(Y/L)^*$. Hence, by \eqref{key}
\begin{align*}
\|(\lev  z_i^*  , Q_Lf_T I_{\eee}  (p_i) - Q_Lf_T\right.&\left. I_{\eee} \rev (q_i))_{i=1}^m\|_{r^*}\\
& = \|(\lev  Q_L^* z_i^*  , f_T (p_i)-f_T(q_i) \rev)_{i=1}^m\|_{r^*}\\
&\leq D_{p,q}^\beta(T) \|(\pimqi)_{i=1}^m\|_p^{w,\beta} \|(Q_L^* z_i^*)_{i=1}^m\|_q^w\\
&\leq D_{p,q}^\beta(T) \|(\pimqi)_{i=1}^m\|_p^{w,\br} \|( z_i^*)_{i=1}^m\|_q^w.
\end{align*}
Then the composition $Q_Lf_TI_{\eee}$ is a $\br$-$(p,q)$-dominated multilinear operator and $D_{p,q}^\br(Q_Lf_TI_{\eee})\leq D_{p,q}^\beta(T)$.

For the converse we apply Proposition \ref{isometry} in its version of the norm $\beta$. Let $u=\sum_{i=1}^m \lambda_i (\pimqi)\tens y_i^*$ and $\eta>0$. By the finitely generated property of $\alpha_{q^*,p^*}^\beta$ (see Proposition \ref{finitelygenerated}) there exist finite dimensional subspaces $E_i$ and $F$ of $X_i$, $1\leq i \leq n$ and $Y^*$ respectively such that $u\in \etetf$ and
$$\alpha_{q^*,p^*}^{\br}(u;\eee, F)\leq (1+\eta)\alpha_{q^*,p^*}^\beta(u;\xxx,Y^*).$$

Consider the finite codimensional subspace $L$ of $Y$ such that $(Y/L)^*$ is isometric to $F$ via $Q_L^*$. Let $\psi_i$ in $(Y/L)^*$ such that $(Q_L)^*(\psi_i)=y_i$, $1\leq i \leq m$. Proposition \ref{isometry} for the case $\br$ asserts that $\zeta_{Q_Lf_T I_{\eee}}:(\ete\tens (Y/L)^*,\alpha_{q^*,p^*})\into \kk$ is bounded and $\|\zeta_{Q_Lf_T I_{\eee}}\|=D_{p,q}^\br(Q_Lf_T I_{\eee})$.

Algebraic manipulations lead to $\zeta_T(u)=\zeta_{Q_Lf_TI_{\eee}} (\sum_{i=1}^m \lambda_i    (  \pimqi )\tens \psi_i )$. Then
\begin{align*}
|\zeta_T(u)|&\leq \|\zeta_{Q_Lf_TI_{\eee}}\|\alpha_{q^*,p^*}^\br\left(\sumim \lambda_i    (  \pimqi )\tens \psi_i;\eee,(Y/L)^*\right)\\
               &\leq   C \alpha_{q^*,p^*}^\br( u;\eee,F)\\
               &\leq   C (1+\eta)\alpha_{q^*,p^*}^\beta(u;\xxx,Y).
\end{align*}
This way $\|\zeta_T\|\leq C(1+\eta)$. That is, $T$ is $\beta$-$(p,q)$-dominated and $D_{p,q}^\beta(T)\leq C$.
\end{proof}

\subsection{Representation of the Class $\mathcal{D}_{p,q}$ in Terms of the Laprest\'e Tensor Norm $\mathbf{\apq}$}

In this subsection we exhibit the maximal ideal nature of the collection of all $(p,q)$-dominated multilinear operators. In particular, we prove that the property of being $(p,q)$-dominated is preserved under certain compositions. In duality, the Laprest\'e tensor norms verify a particular uniform property. To prove these results we need one more result about $\Sigma$-operators.

Let $\sxxb$ denote the metric space $\sxx$ endowed with the metric induced by $\beta$. From \cite[Theorem 2.1]{fernandez-unzueta18a}, if $\Txxy$ is a bounded multilinear operator, then
\begin{eqnarray}\label{betalip}
f_T:\sxxb &\into & Y\nonumber\\
         a   &\mapsto &  \tlin(a)
\end{eqnarray}
is Lipschitz and $\|T\|=Lip^\pi(f_T)\leq \Lb(f_T)\leq 2^{n-1} Lip^\pi (f_T)$.

Let $n$ in $\mathbb{N}$ and $1 \leq p, q\leq \infty$ such that $\frac{1}{p}+\frac{1}{q}\leq 1$. Also let $\xxx$ and $Y$ be Banach spaces and $\beta$ be a reasonable crossnorm on $\xtx$. Let $\mathcal{D}_{p,q}^\beta(\xxx; Y)$ denote the set of $\beta$-$(p,q)$-dominated multilinear operators form $\xpx$ to $Y$. Proposition \ref{complete} (for the case $\beta$) implies that $\mathcal{D}_{p,q}^\beta(\xxx; Y)$ is a Banach space endowed with the norm $D_{p,q}^\beta$.

The class $\mathcal{D}_{p,q}$ is defined as the union over all possible Banach spaces of the form $\mathcal{D}_{p,q}^\beta(\xxx; Y)$. That is
$$\mathcal{D}_{p,q}=\bigcup \mathcal{D}_{p,q}^\beta(\xxx;Y).$$
Also define $D_{p,q}$ as the application on $\mathcal{D}_{p,q}$ whose restriction to $\mathcal{D}_{p,q}^\beta(\xxx;Y)$ is the norm $D_{p,q}^\beta$.

\begin{proposition}\label{ideal}
Let $\xxx$, $Y$ be Banach spaces and $\beta$ be a reasonable crossnorm on $\xtx$. Then:
\begin{itemize}
    \item [i)]  $D_{p,q}^\beta(\fhi\cdot y)\leq\|\fhi\|_\beta\;\|y\|$ for all $\fhi\in\Lbxx$ and $y\in Y$.\\
    \item [ii)]  $\Lb(f_T) \leq D_{p,q}^\beta(T)$ for all $T\in \Dpqbxxy$.\\
    \item [iii)]  Let $\zzz, W$ be Banach spaces and $\theta$ be a reasonable cross norm on $\ztz$. If in the composition
    $$\begin{array}{c}
\xymatrix{
\szzt\ar[r]^-{f_R}   &   \sxxb\ar[r]^-{f_T}   &   Y\ar[r]^-S   &   W \\
}
\end{array}$$
     $R:\zpz\into\xtxb$ is a bounded multilinear operator , $T$ is an element of $\Dpqbxxy$ and $S:Y\into W$ is a bounded linear operator such that $\tilde{R}:\ztzt\into \xtxb$ is bounded and $f_R(\szz^\theta)\subset\sxxb$, then $Sf_T R$ belongs to $\mathcal{D}_{p,q}^\theta(\zzz; W)$ and $D_{pq}^\theta(Sf_T R)\leq \|\rlin\|D_{p,q}^\beta(T)\|S\|$.
\end{itemize}
\end{proposition}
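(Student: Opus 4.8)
The plan is to dispatch (i) and (ii) with short direct estimates straight from Definition \ref{betadominatedoperators}, and to concentrate on (iii), which is the substantial part.

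\emph{Item (i).} The operator $\fhi\cdot y$ sends $(x^1,\dots,x^n)$ to $\fhi(x^1,\dots,x^n)\,y$, so writing $p_i=x_i^1\tens\dots\tens x_i^n$, $q_i=z_i^1\tens\dots\tens z_i^n$ one has
$$\lev y_i^*,(\fhi\cdot y)(x_i^1,\dots,x_i^n)-(\fhi\cdot y)(z_i^1,\dots,z_i^n)\rev=(f_\fhi(p_i)-f_\fhi(q_i))\,y_i^*(y).$$
Since $1=\frac1r+\frac1p+\frac1q$ gives $\frac{1}{r^*}=\frac1p+\frac1q$, I would apply H\"older's inequality with exponents $p$ and $q$ to this sequence of products and then bound the first resulting factor by $\|\fhi\|_\beta\,\|(p_i-q_i)_i\|_p^{w,\beta}$ (as $\fhi/\|\fhi\|_\beta\in B_{\Lbxx}$) and the second by $\|y\|\,\|(y_i^*)_i\|_q^w$ (using $y_i^*(y)=K_Y(y)(y_i^*)$ with $K_Y(y)\in\|y\|\,B_{Y^{**}}$); the cases $p=\infty$ or $q=\infty$ reduce to the stated conventions. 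This yields $D_{p,q}^\beta(\fhi\cdot y)\leq\|\fhi\|_\beta\,\|y\|$.

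\emph{Item (ii).} I would invoke the defining inequality of $D_{p,q}^\beta(T)$ with a single index $m=1$, one pair of decomposable tensors $a,b$ and one functional $y^*$. The single-term weak norms collapse to $\|(a-b)\|_p^{w,\beta}=\sup_{\fhi\in B_{\Lbxx}}|f_\fhi(a)-f_\fhi(b)|=\beta(a-b)$ (using that linearization is an isometric isomorphism $\Lbxx\cong(\xtxb)^*$) and $\|(y^*)\|_q^w=\|y^*\|$. Hence $|\lev y^*,f_T(a)-f_T(b)\rev|\leq D_{p,q}^\beta(T)\,\beta(a-b)\,\|y^*\|$, and taking the supremum over $\|y^*\|\leq1$ gives $\|f_T(a)-f_T(b)\|\leq D_{p,q}^\beta(T)\,\beta(a-b)$, i.e. $\Lb(f_T)\leq D_{p,q}^\beta(T)$.

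\emph{Item (iii).} The first task is to see that the composite is a genuine multilinear operator. Because $f_R(\szz^\theta)\subset\sxxb$ and $f_T=\tlin|_{\sxxb}$, on decomposable tensors $Sf_Tf_R$ agrees with $S\tlin R$; since $\tlin$ and $S$ are linear and $R$ is multilinear, $U:=S\tlin R:\zpz\into W$ is bounded and multilinear with $f_U=Sf_Tf_R$. I would then check the $\theta$-$(p,q)$-domination inequality for $U$. For decomposable tensors $a_i,b_i\in\szz^\theta$ and $w_i^*\in W^*$ set $c_i=f_R(a_i)$, $d_i=f_R(b_i)\in\sxxb$ and peel off $S$ through its adjoint: $\lev w_i^*,f_U(a_i)-f_U(b_i)\rev=\lev S^*w_i^*,f_T(c_i)-f_T(d_i)\rev$. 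Applying the $\beta$-$(p,q)$-domination of $T$ to $(c_i)$, $(d_i)$ and $(S^*w_i^*)$ gives
$$\|(\lev w_i^*,f_U(a_i)-f_U(b_i)\rev)_i\|_{r^*}\leq D_{p,q}^\beta(T)\,\|(c_i-d_i)_i\|_p^{w,\beta}\,\|(S^*w_i^*)_i\|_q^w.$$
It remains to transfer the two weak norms back to $\theta$-quantities. The second is routine: a bounded operator cannot increase weak summability, so $\|(S^*w_i^*)_i\|_q^w\leq\|S^*\|\,\|(w_i^*)_i\|_q^w=\|S\|\,\|(w_i^*)_i\|_q^w$. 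The delicate factor is the first, and this is where I expect the main obstacle. For each $\fhi\in B_{\Lbxx}$ the functional $\widetilde{\fhi}\circ R$ is a multilinear form on $\zpz$ whose $\theta$-linearization is $\widetilde{\fhi}\circ\rlin$, hence has norm at most $\|\fhi\|_\beta\,\|\rlin\|\leq\|\rlin\|$; moreover $f_\fhi(c_i)-f_\fhi(d_i)=(\widetilde{\fhi}\circ\rlin)(a_i-b_i)=f_{\widetilde{\fhi}\circ R}(a_i)-f_{\widetilde{\fhi}\circ R}(b_i)$. Taking the supremum over $\fhi\in B_{\Lbxx}$ therefore gives $\|(c_i-d_i)_i\|_p^{w,\beta}\leq\|\rlin\|\,\|(a_i-b_i)_i\|_p^{w,\theta}$. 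Substituting both bounds shows $U=Sf_TR\in\mathcal{D}_{p,q}^\theta(\zzz;W)$ with $D_{p,q}^\theta(Sf_TR)\leq\|\rlin\|\,D_{p,q}^\beta(T)\,\|S\|$. The crux is precisely the estimate $\|(f_R(a_i)-f_R(b_i))_i\|_p^{w,\beta}\leq\|\rlin\|\,\|(a_i-b_i)_i\|_p^{w,\theta}$: one must recognize that testing the $\beta$-weak norm of the images against $\fhi\in B_{\Lbxx}$ is the same as testing the $\theta$-weak norm of the arguments against the form $\widetilde{\fhi}\circ R$ of norm at most $\|\rlin\|$, which is exactly how the multilinear map $R$ carries weak $p$-summability from the $Z$-side to the $X$-side.
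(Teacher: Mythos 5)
Your proposal is correct and follows essentially the same route as the paper: the paper dismisses (i) and (ii) as immediate from the definition (your explicit H\"older and $m=1$ arguments are exactly the intended ones), and for (iii) it uses precisely your chain --- peel off $S$ via $S^*$, apply the $\beta$-$(p,q)$-domination of $T$ to the sequences $(f_R(p_i))$, $(f_R(q_i))$, $(S^*w_i^*)$, and then bound $\|(f_R(p_i)-f_R(q_i))_i\|_p^{w,\beta}\leq\|\rlin\|\,\|(p_i-q_i)_i\|_p^{w,\theta}$ and $\|(S^*w_i^*)_i\|_q^w\leq\|S\|\,\|(w_i^*)_i\|_q^w$. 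Your justification of the transfer of the weak $p$-norm through $\rlin$ by testing against $\widetilde{\fhi}\circ\rlin$ is a detail the paper leaves implicit, but it is the right one.
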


\begin{proof}
(i) and (ii) follows immediately from definition. We only prove (iii). Let $(p_i)_{i=1}^m$ and $(q_i)_{i=1}^m$ be finite sequences in $\Sigma_{\zzz}^\theta$ and $(w_i^*)_{i=1}^m$ in $W^*$. Then
\begin{align*}
\|(\lev w_i^* , Sf_T f_R (p_i)-Sf_Tf_R(q_i) \rev&)_{i=1}^m\|_{r^*} = \|(\lev S^*w_i^* , f_T f_R (p_i)-Sf_Tf_R(q_i) \rev)_{i=1}^m\|_{r^*}\\
  &\leq D_{p,q}^\beta(T) \|(f_R(p_i)-f_R(q_i))_{i=1}^m\|_p^{w,\beta}\|(S^*w_i^*)_{i=1}^m\|_q^w \\
  &\leq \|\tilde{R}\|D_{p,q}^\beta(T)\|S\|\|(p_i-q_i)_{i=1}^m\|_p^{w,\theta}\|(w_i^*)_{i=1}^m\|_q^w
\end{align*}
asserts that $Sf_T R$ is $\theta$-$(p,q)$-dominated and $D_{p,q}^\theta(Sf_T R)\leq \|\tilde{R}\|D_{p,q}^\beta(T)\|S\|$.
\end{proof}

Bearing in mind Proposition \ref{ideal} and Theorem \ref{maximal} we may say that the pair $[\mathcal{D}_{p,q},D_{p,q}]$ has a maximal ideal behavior.

Let $1 \leq p, q\leq \infty$ such that $\frac{1}{p}+\frac{1}{q}\geq 1$. Let $\apq$ be an assignment defined as next: to each arrangement $(n,\xxx, Y, \beta)$, $\apq$ assigns the norm $\apqb$ on $\xtxty$, where $n$ is a natural number, $X_i$, $1 \leq i\leq n$, $Y$ are Banach spaces and $\beta$ is a reasonable crossnorm on $\xtx$. The assignment $\apq$ is called the Laprest\'e tensor norm.

\begin{proposition}\label{tensornorm}
Let $\xxx, Y$ be Banach spaces and $\beta$ be a reasonable crossnorm on $\xtx$. Then:
\begin{itemize}
	\item [i)] $\apqb$ is a norm on $\xtxty$.	
	\item [ii)]   $\apqb((a-b)\tens y)\leq\beta(a-b)\,  \|y\|$ for all $a,b\in\sxxb$ and $y\in Y$.
	\item [iii)]  Let $\fhi\in\Lbxx$ and $y^*\in Y^*$. The functional generated by
	\begin{eqnarray*}
\fhi\tens y^*:\left(\xtxty,\apqb\right) &\into &		  \kk\\
\xxt\tens y &\mapsto &  f_\fhi(\xxt)\,  y^*(y)
\end{eqnarray*}
is bounded and $\Lb(f_\fhi)\|y^*\|\leq\|\fhi\tens y^*\|\leq \|\fhi\|_\beta\,  \|y^*\|$.
	\item [iv)] Let $\zzz, W$ be Banach spaces and $\theta$ be a reasonable cross norm on $\ztz$. If $R:\zpz\into\xtxb$ is a bounded multilinear operator such that $\tilde{R}:\ztzt\into \xtxb$ is bounded and $f_R(\szz^\theta)\subset\sxx^\beta$ and if $S:W\into Y$ is a bounded linear operator, then
	\begin{eqnarray*}
R\tens S:\left(\ztztw, \apq^\theta\right) &\into &		  \left(\xtxty,\apqb\right)\\
\zzt\tens w &\mapsto &  f_R(\zzt)\tens S(w)
\end{eqnarray*}
is bounded and $\|R\tens S\|\leq \|\rlin\|\,  \|S\|$.
\end{itemize}
\end{proposition}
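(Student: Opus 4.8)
My plan is to establish the four items in a logical order, deducing positive-definiteness of $\apqb$ in (i) only after the upper estimate of (iii). Absolute homogeneity is immediate, since scaling $u$ by a scalar $\mu$ just replaces $(\lambda_i)$ by $(\mu\lambda_i)$ in every admissible representation. For the triangle inequality I would exploit the \emph{scaling freedom} of a representation $u=\rlu$: for positive reals $\sigma,\tau,\rho$ with $\sigma\tau\rho=1$ one may replace $\lambda_i$ by $\sigma\lambda_i$, the decomposable difference $\pimqi$ by $\tau(\pimqi)$ (absorbing $\tau$ into the first tensor factor of both $p_i$ and $q_i$, so they stay in $\sxxb$), and $y_i$ by $\rho y_i$; this leaves $u$ unchanged and multiplies the three factors $\|(\lambda_i)\|_r$, $\|(\pimqi)\|_{q^*}^{w,\beta}$, $\|(y_i)\|_{p^*}^w$ by $\sigma,\tau,\rho$. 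Given $\eta>0$ and, assuming $u,v\ne0$ (the degenerate cases being trivial), near-optimal representations of $u$ and $v$ whose products of the three factors satisfy $P_u\le\apqb(u)+\eta$ and $P_v\le\apqb(v)+\eta$, I would rescale each representation so that its three factors become $P_u^{1/r},P_u^{1/q^*},P_u^{1/p^*}$ (resp. the $P_v$-powers); this is admissible since the product of the scaling factors is $P_u^{1/r+1/q^*+1/p^*}/P_u=1$. Concatenating the representations yields one for $u+v$ whose scalar $\ell_r$-norm is exactly $(P_u+P_v)^{1/r}$, while each weak norm is controlled by $(P_u+P_v)^{1/q^*}$, resp. $(P_u+P_v)^{1/p^*}$ (the supremum over $\fhi$, resp. $y^*$, of a sum is at most the sum of the suprema). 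Since $\tfrac1r+\tfrac1{q^*}+\tfrac1{p^*}=1$, the product collapses to $P_u+P_v$, whence $\apqb(u+v)\le\apqb(u)+\apqb(v)+2\eta$, and $\eta\to0$ finishes subadditivity.

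Item (ii) is the one-term case of the infimum: with $m=1$, $\lambda_1=1$, $p_1=a$, $q_1=b$, $y_1=y$, the $\ell_r$-factor is $1$, the weak norm $\|(a-b)\|_{q^*}^{w,\beta}=\sup_{\fhi\in B_{\Lbxx}}|f_\fhi(a)-f_\fhi(b)|$ equals $\beta(a-b)$ by the Hahn--Banach duality between $\xtxb$ and $\Lbxx$, and $\|(y)\|_{p^*}^{w}=\|y\|$, so the product is $\beta(a-b)\,\|y\|$. For the upper bound in (iii) I would expand $\lev\fhi\tens y^*,u\rev=\sumim\lambda_i\bigl(f_\fhi(p_i)-f_\fhi(q_i)\bigr)y^*(y_i)$ for an arbitrary representation and apply the generalized H\"older inequality with exponents $r,q^*,p^*$; bounding $\bigl(\sumim|f_\fhi(p_i)-f_\fhi(q_i)|^{q^*}\bigr)^{1/q^*}\le\|\fhi\|_\beta\,\|(\pimqi)\|_{q^*}^{w,\beta}$ and the analogous factor for $y^*$ gives $|\lev\fhi\tens y^*,u\rev|\le\|\fhi\|_\beta\,\|y^*\|$ times the product of the three norms; taking the infimum over representations yields $\|\fhi\tens y^*\|\le\|\fhi\|_\beta\,\|y^*\|$.

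This upper bound settles positive-definiteness in (i): if $\apqb(u)=0$ then every functional $\fhi\tens y^*$ annihilates $u$, and since these separate the points of the algebraic tensor product $\xtxty$, we get $u=0$. The lower bound in (iii) then follows by testing $\fhi\tens y^*$ against $u=(a-b)\tens y$ and invoking (ii): the inequality $|(f_\fhi(a)-f_\fhi(b))\,y^*(y)|\le\|\fhi\tens y^*\|\,\beta(a-b)\,\|y\|$, after taking the supremum over $a\ne b$ and over $y$, gives $\Lb(f_\fhi)\,\|y^*\|\le\|\fhi\tens y^*\|$. For (iv) I would first note that $R\tens S$ is well defined, as it coincides with the algebraic tensor product $\rlin\tens S$ and returns $f_R(\zzt)\tens S(w)$ on decomposables because $\rlin$ restricts to $f_R$ on $\szzt$ and $f_R(\szzt)\subset\sxxb$. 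Applying $R\tens S$ to a representation $u=\sumim\lambda_i(\pimqi)\tens w_i$ (with $p_i,q_i\in\szzt$ and $w_i\in W$) produces the representation $\sumim\lambda_i\bigl(f_R(p_i)-f_R(q_i)\bigr)\tens S(w_i)$ of $(R\tens S)(u)$ with the same scalars; the $\ell_r$-factor is unchanged, the weak $\beta$-norm of $\bigl(f_R(p_i)-f_R(q_i)\bigr)$ is at most $\|\rlin\|$ times the weak $\theta$-norm of $(\pimqi)$ (since for $\fhi\in B_{\Lbxx}$ the functional $\widetilde{\fhi}\circ\rlin$ on $\ztz$ has norm at most $\|\rlin\|$), and the weak $p^*$-norm of $(S(w_i))$ is at most $\|S\|$ times that of $(w_i)$ via $S^*$. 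Multiplying and passing to the infimum gives $\apqb\bigl((R\tens S)(u)\bigr)\le\|\rlin\|\,\|S\|\,\apq^\theta(u)$, i.e. $\|R\tens S\|\le\|\rlin\|\,\|S\|$.

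The main obstacle is the triangle inequality in (i). Naively concatenating two representations only yields, through the generalized H\"older inequality, the \emph{useless} lower bound $P_u+P_v\le\apqb(u+v)$; the crux is the preliminary rescaling that forces equality in H\"older, and identifying the clean symmetric choice of scaling factors — the powers $1/r,1/q^*,1/p^*$ of the respective products — is exactly what makes the three concatenated norms multiply to $P_u+P_v$. The remaining verifications are routine uses of the generalized H\"older inequality and of the duality $\Lbxx=(\xtxb)^*$, with the boundary cases in which some exponent is infinite handled by the usual conventions for $\ell_\infty$-norms and zero exponents.
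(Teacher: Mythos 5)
Your proposal is correct and follows essentially the same route as the paper: the generalized H\"older inequality gives the upper bound in (iii), positive-definiteness of $\apqb$ is deduced from the boundedness of the separating functionals $\fhi\tens y^*$ (with $\fhi$ decomposable), and (ii) and (iv) are handled by the same term-by-term estimates on a representation. The only difference is that you spell out the rescaling-and-concatenation argument for the triangle inequality, which the paper dismisses as ``standard arguments.''
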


\begin{proof}
i): The triangle inequality and homogeneous property follows from standard arguments. Lets prove that $\apqb(u)=0$ implies $u=0$. For this end let $u=\sum_{i=1}^m \lambda_i (\pimqi)\tens y_i$, $\fhi\in\Lbxx$ and $y^*\in Y^*$. The H\"{o}lder's inequality implies
\begin{eqnarray}\label{laprestenorms1}
|\lev\fhi\tens y^*, u\rev| &\leq& \left(\sumim |\lambda_i|^r \right)^{\frac{1}{r}} \left(\sumim |f_\fhi(p_i)-f_\fhi(q_i)|^{q^*}\right)^{\frac{1}{q^*}}\left(\sumim |y^*(y_i)|^{p^*}\right)^{\frac{1}{p^*}} \nonumber\\
                          &\leq& \|\fhi\|_\beta\,  \|y^*\|\,  \|(\lambda_i)\|_r\; \|(\pimqi)\|_{q^*}^{w,\beta}\;  \|(y_i)\|_{p^*}^w.
\end{eqnarray}
Therefore, since $\beta$ is a reasonable crossnorm, the condition $\apqb(u)=0$ implies that $x_1^*\tens\dots\tens x_n^*\tens y^*(u)=0$ for all $x_i^*\in X_i^*$, $1\leq i \leq n$ and $y^*\in Y^*$. Hence, $u$ must be zero.

(ii): Immediately form definition.

(iii): Actually (\ref{laprestenorms1}) proves that $\fhi\tens y^*$ is bounded and $\|\fhi\tens y^*\|\leq \|\fhi\|_\beta\|y^*\|$. For the another inequality let $\eta>0$ and take $a$ and $b$ in $\sxxb$ such that $\Lb(f_\fhi)(1-\eta)\leq |f_\fhi(a)-f_\fhi(b)|$ and $\beta(a-b)\leq 1$. Analogously, choose $y$ in $B_{Y}$ with $\|y^*\|(1-\eta)\leq |y^*(y)|$. Then
$$\Lb(f_\fhi)\,  \|y^*\|\,  (1-\eta)^2  \leq \|\fhi\tens y^*\|\,  \apqb((a-b)\tens y)\leq \|\fhi\tens y^*\|.$$

(iv): First, notice that
$$\|(f_R(p_i)-f_R(q_i))\|_{q^*}^{w, \beta}\leq \|\rlin\|\,  \|(\pimqi))\|_{q^*}^{w, \theta}$$
holds for all finite sequences $(p_i)$ and $(q_i)$ in $\szz^\theta$. Therefore, for any representation $\sum_{i=1}^m \lambda_i (\pimqi)\tens y_i$ of $v$ in $\ztztw$ we have
\begin{align*}
\apqb\left(R\tens S (v);\xxx, Y\right)  &=     \apqb\left( \sumim \lambda_i(f_R(p_i)-f_R(q_i))\tens S(y_i)\right) \\
                     &\leq  \|(\lambda_i)\|_r\; \|(f_R(p_i)-f_R(q_i))\|_{q^*}^{w, \beta}\; \|(S(y_i))\|_{p^*}^w\\
                     &\leq  \|\rlin\|\; \|S\|\; \|(\lambda_i)\|_r\; \|(\pimqi))\|_{q^*}^{w, \theta}\; \|(y_i)\|_{p^*}^w.
\end{align*}
This asserts that $R\tens S: (\ztztw,\alpha_{p,q}^\theta)\into(\xtxty,\apqb)$ is bounded and $\|R\tens S\|\leq \|\rlin\|\;\|S\|$.
\end{proof}

As stated in Proposition \ref{isometry} for the case of the norm $\beta$, $\mathcal{D}_{p,q}^\beta(\xxx; Y^*)$ is a closed subspace of the topological dual of $(\xtx \tens Y^{**},\apq^\beta)$. In the next theorem we show that $\mathcal{D}_{p,q}^\beta(\xxx; Y^*)$ is, indeed, a dual space.

\begin{theorem}\label{tensorialrepresentation}
Let $\xxx$, $Y$ be Banach spaces and $\beta$ be a reasonable crossnorm on $\xtx$. Then
$$(\xtxty,\alpha_{q^*,p^*}^\beta)^*=\mathcal{D}_{p,q}^\beta(\xxx; Y^*)$$
holds isometrically isomorphic.
\end{theorem}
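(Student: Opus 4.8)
**The plan is to establish the isometric isomorphism by exhibiting the duality pairing explicitly and verifying it is an isometry in both directions.** The natural candidate map sends a functional $\zeta$ on $(\xtxty,\alpha_{q^*,p^*}^\beta)$ to a multilinear operator $T_\zeta:\xpx\into Y^*$ defined by $\langle T_\zeta\xxp, y\rangle := \zeta(\xxt\tens y)$ for $y\in Y$. First I would check this is well defined: for fixed decomposable $\xxt$, the assignment $y\mapsto \zeta(\xxt\tens y)$ is a bounded linear functional on $Y$ by part (ii) of Proposition~\ref{tensornorm}, so it determines an element of $Y^*$; multilinearity in the $x^i$ follows from the multilinearity of the tensor map into $\xtx$. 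Conversely, given $T\in\mathcal{D}_{p,q}^\beta(\xxx;Y^*)$, Proposition~\ref{isometry} (in its $\beta$-version) produces the functional $\zeta_T$ on $(\xtx\tens Y^{**},\alpha_{q^*,p^*}^\beta)$; restricting the domain to $\xtx\tens Y$ via the canonical embedding $K_Y:Y\into Y^{**}$ yields a functional on $(\xtxty,\alpha_{q^*,p^*}^\beta)$, which I claim is the inverse construction.

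Next I would verify that these two constructions are mutually inverse, which is a direct unwinding of the definitions on decomposable tensors $\xxt\tens y$ and extends by linearity and density of such tensors in the algebraic tensor product. The crux is then the \emph{isometry}. In one direction, $D_{p,q}^\beta(T_\zeta)=\|\zeta\|$ should follow from Proposition~\ref{isometry} applied to $T_\zeta$ together with the observation that the relevant pairing $\langle y_i^*, f_{T_\zeta}(p_i)-f_{T_\zeta}(q_i)\rangle$ coincides with $\langle\zeta_{T_\zeta}, (\pimqi)\tens y_i^*\rangle$; I would need to reconcile the roles of $Y$ versus $Y^{**}$ appearing in the domination, using that the Laprest\'e norm $\alpha_{q^*,p^*}^\beta$ pairs $\xtx\tens Y^{**}$ with $\mathcal{D}_{p,q}^\beta(\xxx;Y^*)$. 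In the other direction, $\|\zeta_T\|\leq D_{p,q}^\beta(T)$ is immediate, and the reverse inequality uses that norming $\zeta$ over decomposable-tensor representations recovers exactly the domination inequality defining $D_{p,q}^\beta$.

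\textbf{The main obstacle I anticipate is the passage between $Y$ and $Y^{**}$.} The space $\mathcal{D}_{p,q}^\beta(\xxx;Y^*)$ consists of operators valued in $Y^*$, while Proposition~\ref{isometry} naturally pairs such operators against tensors in $\xtx\tens Y^{**}$; yet the dual we wish to identify is that of $(\xtxty,\alpha_{q^*,p^*}^\beta)$, built over $Y$ itself. The subtlety is to confirm that restricting from $Y^{**}$ to $K_Y(Y)$ loses no norm, i.e. that the supremum defining $\|\zeta_T\|$ over sequences $(y_i^{**})$ in $Y^{**}$ is already attained (or approximated) using sequences coming from $Y$. This should follow from a weak$^*$-density argument: $B_{K_Y(Y)}$ is weak$^*$-dense in $B_{Y^{**}}$ by Goldstine's theorem, and the weak $p^*$-summing norm $\|(\cdot)\|_{p^*}^w$ is computed as a supremum over $B_{Y^{***}}$, so finite-sequence quantities involving $Y^{**}$ are approximable by those involving $Y$. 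I would handle this carefully since the domination inequality must be preserved under this density passage, and this is precisely where the finitely-generated behavior established in Proposition~\ref{finitelygenerated} and the local description in Theorem~\ref{maximal} may be invoked to reduce to the finite-dimensional case, where $Y$ and $Y^{**}$ coincide on the relevant subspaces and the reflexivity issue evaporates.
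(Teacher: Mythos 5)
Your overall architecture matches the paper's: the same two mutually inverse assignments $\zeta\mapsto T_\zeta$ and $T\mapsto \fhi_T=\zeta_T\circ(I_{X_1}\tens\dots\tens I_{X_n}\tens K_Y)$, with Proposition~\ref{isometry} (in its $\beta$-version) and the uniform property (iv) of Proposition~\ref{tensornorm} giving $\|\fhi_T\|\leq D_{p,q}^\beta(T)$. You also correctly isolate the real difficulty: $D_{p,q}^\beta(T_\zeta)$ is defined by testing against finite sequences $(y_i^{**})_{i=1}^m$ in $Y^{**}=(Y^*)^*$, whereas $\zeta$ only sees elements of $Y$.

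The resolution you propose for that difficulty has a genuine gap. Goldstine's theorem lets you approximate each $y_i^{**}$ individually in the weak$^*$ topology by elements of $Y$ of no larger norm, and this does control the finitely many pairings $\lev y_i^{**}, f_{T_\zeta}(p_i)-f_{T_\zeta}(q_i)\rev$; but it does \emph{not} control $\|(y_i)_{i=1}^m\|_q^w=\sup_{y^*\in B_{Y^*}}\left(\sum_i|y^*(y_i)|^q\right)^{1/q}$ in terms of $\|(y_i^{**})_{i=1}^m\|_q^w$. The weak $q$-norm is a supremum of weak$^*$-continuous functions, hence only weak$^*$ lower semicontinuous, so coordinatewise approximants can have a much larger weak $q$-norm than their limits: the inequality you need goes in the wrong direction. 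Nor do finite-dimensional subspaces of $Y^{**}$ ``coincide with'' subspaces of $Y$; the reduction you sketch via Proposition~\ref{finitelygenerated} and Theorem~\ref{maximal} is not applicable, since those results localize in the factors $X_i$ and in the target of $T$, not in the bidual of the predual $Y$ of the tensor product. What is needed --- and what the paper uses --- is the Principle of Local Reflexivity: a linear map $\phi:\mathrm{span}\{y_1^{**},\dots,y_m^{**}\}\into Y$ with $\|\phi\|\leq 1+\eta$ reproducing the pairings with the finitely many functionals $f_{T_\zeta}(p_i)-f_{T_\zeta}(q_i)\in Y^*$. Linearity together with $\|\phi\|\leq 1+\eta$ yields $\|(\phi(y_i^{**}))_i\|_q^w\leq(1+\eta)\|(y_i^{**})_i\|_q^w$ (by Hahn--Banach the weak $q$-norm of $(y_i^{**})$ computed in the span equals that computed in $Y^{**}$), which is precisely the control your argument is missing. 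Replacing the Goldstine step by local reflexivity closes the proof.
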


\begin{proof}
Let $T$ in $\mathcal{D}_{p,q}^\beta(\xxx; Y^*)$. Proposition \ref{isometry} for the case of the norm $\beta$ implies that $\zeta_T$ is bounded on $(\xtx\tens Y^{**},\alpha_{q^*,p^*}^\beta)$ and $\|\zeta_T\|=D_{p,q}^\beta(T)$. Define
\begin{eqnarray*}
\fhi_T:(\xtxty,\alpha_{q^*,p^*}^\beta)&\into & \kk\\
                                                      \xxt\tens y   &\mapsto &  \lev  T\xxp  ,  y \rev.
\end{eqnarray*}
Notice that for all $\xxp$ in $\xpx$ and $y$ in $Y$ we have
$$\lev  \zeta_T  ,  \xxt\tens K_Y(y)  \rev = \lev T\xxp  ,  y  \rev= \fhi_T(\xxt\tens y).$$

Hence, (iv) of Proposition \ref{tensornorm} asserts
\begin{align*}
|\fhi_T (u)| &= |\zeta_T \circ \left( I_{X_1}\tens\dots\tens I_{X_n}\tens K_Y \right) (u)  | \\
               &\leq  \|\zeta_T\| \alpha_{q^*,p^*}^\beta \left(I_{X_1}\tens\dots\tens I_{X_n}\tens K_Y \right) (u))|\\
              &\leq  D_{p,q}^\beta(T) \alpha_{q^*,p^*}^\beta (u)
\end{align*}
for all $u$ in $\xtxty$.

On the other hand, let $\fhi$ in $(\xtxty,\alpha_{q^*,p^*}^\beta)^*$ and define
\begin{eqnarray*}
T_\fhi:\xpx &\into&        Y^*\\
           \xxp &\mapsto & T_\fhi\xxp:y\mapsto \fhi(\xxt\tens y).
\end{eqnarray*}

To see that $T$ is $\beta$-$(p,q)$-dominated let $(p_i)_{i=1}^m$, $(q_i)_{i=1}^m$ and $(y_i^{**})_{i=1}^m$ finite sequences in $\sxxb$ and $Y^{**}$ respectively and let $\eta>0$. Apply the Principle of Local Reflexivity to the spaces $G:=span\{y_i^{**}|1\leq i \leq m\}\subset Y^{**}$ and $span\{ f_{T_\fhi}(p_i) -f_{T_\fhi} (q_i) | 1\leq i \leq m \}\subset Y^*$ to find a finite dimensional subspace $F\subset Y$ and an isomorphism $\phi:G\into F$ such that $\|\phi\|\leq 1+\eta$ and
$$\lev y_i^{**}  ,   f_{T_\fhi}(p_i) -f_{T_\fhi} (q_i)\rev= \lev f_{T_\fhi}(p_i) -f_{T_\fhi} (q_i)  ,   \phi (y_i^{**})  \rev \qquad 1\leq i \leq m.$$
Then, (ii) of Proposition \ref{link} for the case $\beta$ leads us to
\begin{align*}
\|(\lev y_i^{**}  ,  f_{T_\fhi}(p_i) -f_{T_\fhi} (q_i)   \rev)_{i=1}^m\|_{r^*} &= \|(\lev  f_{T_\fhi}(p_i) -f_{T_\fhi} (q_i)  , \phi (y_i^{**})  \rev)_{i=1}^m\|_{r^*} \\
          &= \|(  \fhi( (\pimqi)\tens \phi (y_i^{**})  ) )_{i=1}^m\|_{r^*} \\
     &\leq  \|\fhi\| \|(\pimqi)_{i=1}^m\|_{p}^{w,\beta}  \|(\phi (y_i^{**}))_{i=1}^m\|_q^w  \\
     &\leq   \|\fhi\|(1+\eta) \|(\pimqi)_{i=1}^m\|_{p}^{w,\beta}  \|(y_i^{**})_{i=1}^m\|_q^w.
\end{align*}
Hence $T_\fhi$ is $\beta$-$(p,q)$-dominated and $D_{p,q}^\beta(T)\leq \|\fhi\|$.

Simple algebraic manipulations show that the mappings $T\mapsto \fhi_T$ and $\fhi\mapsto T_\fhi$ are linear and inverse of each other.
\end{proof}


\section{Final Remarks}

\subsection*{Remarks on Proposition \ref{ideal}}

\begin{remark}
In (i) we saw that every bounded multilinear form $\fhi:\xpx\into \kk$ is $\pi$-$(p,q)$-dominated for all positive integer $n$, $1\leq p,q\leq\infty$, and Banach spaces $X_i$, $1\leq i\leq n$. This does not occur in the case of $p$-dominated multilinear operators in the sense of absolutely $(\frac{p}{n}; p,\dots, p)$-summing multilinear operators. This is explicit in \cite[Lemma 5.4]{jarchow07} where it is shown that if $3\leq n$, $1\leq p< \infty$ and $X$ is an infinite dimensional Banach space, then there exists a bounded multilinear form $\fhi:X\times\dots\times X\into \kk$ which is not $p$-dominated. As a consequence, the Banach spaces $\mathcal{D}_{p,q}^\beta(\xxx;Y)$ and $\mathcal{D}_{r_1,\dots r_n}^n(\xxx;Y)$ differ in general.
\end{remark}

\begin{remark}
Another consequence of (i) is that every multilinear operator of finite type $T$, that is, of the form
\begin{small}
$$T\xxp=\sumim x_{1i}^*(x^1)\dots x_{ni}^*(x^n)y_i$$
\end{small}
where $x_{ji}^*\in X_j^*, 1\leq j\leq n, 1\leq i\leq m$ and $y_i\in Y$ is in $\mathcal{D}_{p,q}^\beta(\xxx;Y)$.
\end{remark}

\begin{remark}
Since $\mathcal{D}_{p,q}^\beta(\xxx;Y)$ is a vector space then it contains the vector space of all multilinear operators $\Txxy$ whose image is contained in a finite dimensional subspace of Y and whose linearization $\tlin:\xtxb\into Y$ is bounded.
\end{remark}

\begin{remark}
Item (ii) combined with \eqref{betalip} implies that $\|T\|\leq D_{p,q}^\beta(T)$ for all $\Txxy$ in $\mathcal{D}_{p,q}^\beta(\xxx;Y)$.
\end{remark}

\begin{remark}
The ideal property in (iii) considers compositions of the form
\begin{small}
$$ST(T_1\times\dots\times T_n):\zpz\into W,$$
\end{small}
where $T_i$, $1\leq i \leq n$ and $S:Y\into W$ are bounded linear operators and $T$ is in $\mathcal{D}_{p,q}^\beta(\xxx;Y)$. This way $ST(T_1\times\dots\times T_n)$ is $\pi$-$(p,q)$-dominated and $D_{p,q}^\pi(ST(T_1\times\dots\times T_n))\leq \|S\|D_{p,q}^\beta(T)\|T_1\|\dots\|T_n\|$.
\end{remark}

\begin{remark}
If $\beta$ and $\theta$ are two reasonable crossnorms on $\xtx$ such that $\beta\leq\lambda\theta$, for some $\lambda>0$, then the identity  $I:\mathcal{D}_{p,q}^\theta(\xxx;Y)\into\mathcal{D}_{p,q}^\beta(\xxx;Y)$ is bounded and $\|I\|\leq \lambda$. In particular, if $\beta$ and $\theta$ are equivalent, then the Banach spaces $\mathcal{D}_{p,q}^\theta(\xxx;Y)$ and $\mathcal{D}_{p,q}^\beta(\xxx;Y)$ are isomorphic.
\end{remark}

\subsection*{Remarks on Proposition \ref{tensornorm}}

\begin{remark}
As a result of (ii) and (iii) we have that $\apqb(\xxt\tens y)=\|x^1\|\dots\|x^n\|\|y\|$ for all $x^i\in X_i$, $1 \leq i \leq n$ and $y\in Y$.
\end{remark}

\begin{remark}
From (iii) we have that functionals of the form $x_1^*\tens\dots\tens x_n^*\tens y^*:(\xtxty,\apqb)\into \kk$ are bounded and $\|x_1^*\tens\dots\tens x_n^*\tens y^*\|= \|x_1^*\|\dots\|x_n^*\|\|y^*\|$.
\end{remark}

\begin{remark}
In the uniform property (iv) we are considering operators of the form
\begin{small}
\begin{eqnarray*}
R_1\tens\dots\tens R_n \tens S:(\ztztw,\alpha_{p,q}^\pi)&\into& (\xtxty,\apqb)\\
\zzt\tens w &\mapsto &  R_1(x^1)\tens\dots\tens R_n(x^n)\tens S(w).
\end{eqnarray*}
\end{small}
Moreover if $\beta$ and $\theta$ are reasonable crossnorms on $\xtx$ such that $\beta\leq\lambda\theta$, for some $\lambda>0$, then the identity $I:(\xtxty,\alpha_{p,q}^\theta)\into (\xtxty,\apqb)$ has norm $\leq\lambda$. Furthermore, if $\beta$ and $\theta$ are equivalent, then $(\xtxty,\alpha_{p,q}^\theta)$ and $(\xtxty,\apqb)$ are isomorphic.
\end{remark}

\subsection*{Chevet-Saphar Tensor Norms}

As well as in the linear case, particular cases of the Laprest\'e tensor norm $\apq^\pi$ reduce to generalizations of the Chevet-Saphar tensor norms. The reader may check \cite{chevet69, cohen73, lapreste76, saphar66} for the original references. Other good expositions are also found in \cite{defant93, ryan02}. The next proposition requires standard techniques of tensor products, so we omit the proof. A version for general reasonable crossnorms are also valid.

\begin{proposition}
Let $\xxx$ and $Y$ be Banach spaces. Then:
\begin{itemize}
   \item[i)] $d_p^\pi(u):=\alpha_{1,p}^\pi(u)=\inf\left\{  \|(\pimqi)_i \|_{p}^{w,\pi}  \|(y_i)_i\|_{p^*}   \,\Big|\,  u=\sumim (\pimqi)\tens y_i  \right\}.$
   \item[ii)] $w_p^\pi(u):=\alpha_{p,p^*}^\pi(u)=\inf\left\{  \|(\pimqi)_i \|_{p}^{w,\pi}  \|(y_i)_i\|_{p^*}^w   \,\Big|\,  u=\sumim (\pimqi)\tens y_i  \right\}.$
   \item[iii)] $g_p^\pi(u):=\alpha_{p,1}^\pi(u)=\inf\left\{  \|(\pimqi)_i \|_{p}^\pi  \|(y_i)_i\|_{p^*}^w   \,\Big|\,  u=\sumim (\pimqi)\tens y_i  \right\}.$
   \item[iv)] $\alpha_{1,1}^\pi(u)=d_{1}^\pi(u)=g_1^\pi(u)=\pi(u; \xxx, Y)$
   $$=\inf \left\{ \sumim \pi(\pimqi;\xxx) \| \|y_i\|   \,\Big|\,  u=\sumim (\pimqi)\tens y_i  \right\}$$
\end{itemize}
\end{proposition}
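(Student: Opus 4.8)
The plan is to obtain each of the four identities directly from Definition \ref{laprestenorms} by inserting the prescribed values of $p$ and $q$, solving for the auxiliary exponent $r$ via $1=\tfrac1r+\tfrac1{q^*}+\tfrac1{p^*}$, and then collapsing the three–factor infimum defining $\alpha_{p,q}^\pi$ into the displayed two–factor expression by \emph{absorbing} the scalar sequence $(\lambda_i)_{i=1}^m$ into one of the two remaining factors. The whole argument rests on two elementary observations. First, if $p_i,q_i$ lie in the Segre cone $\sxx$ and $\lambda_i\in\kk$, then $\lambda_i(\pimqi)=(\lambda_i p_i)-(\lambda_i q_i)$ is again a difference of decomposable tensors, with $\pi(\lambda_i(\pimqi))=|\lambda_i|\,\pi(\pimqi)$ and, more generally, $f_\phi(\lambda_i p_i)-f_\phi(\lambda_i q_i)=\lambda_i\bigl(f_\phi(p_i)-f_\phi(q_i)\bigr)$ for every $\phi$; hence the scalars can be pushed into the differences without leaving the class of admissible representations $u=\rlu$. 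Symmetrically, they can be pushed into the vectors $y_i$. Second, H\"older's inequality for the conjugate exponents appearing above lets us recombine the three norms.

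Before the case analysis I would record the boundary identity
$$\|(\pimqi)_{i=1}^m\|_\infty^{w,\pi}=\sup_i\;\sup_{\phi\in B_{\Lxx}}\bigl|f_\phi(p_i)-f_\phi(q_i)\bigr|=\sup_i\pi(\pimqi),$$
which follows from $\Lxx=(\xtxp)^*$ (so that $\sup_{\|\phi\|\le1}|\widetilde\phi(\pimqi)|=\pi(\pimqi)$), together with the crossnorm property $\pi((\pimqi)\tens y_i)=\pi(\pimqi)\,\|y_i\|$. These two facts turn every degenerate weak norm occurring below into a genuine $\pi$–quantity.

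Then I would treat the cases according to the value of $r$. For (ii) one has $q=p^*$, so $\tfrac1{q^*}+\tfrac1{p^*}=\tfrac1p+\tfrac1{p^*}=1$ and hence $r=\infty$; the factor $\|(\lambda_i)\|_\infty$ is normalized to $1$ and absorbed into $(y_i)$ using $\|(\lambda_i y_i)\|_{p^*}^w\le\|(\lambda_i)\|_\infty\|(y_i)\|_{p^*}^w$, which yields both inequalities at once. For (i) and (iii) the constraint forces $r=p$, and exactly one of the two weak norms degenerates to a supremum; absorbing $(\lambda_i)\in\ell_r$ into the factor carrying that supremum converts, via H\"older, the pair $\|(\lambda_i)\|_r\cdot\sup_i(\cdot)$ into a single strong $\ell_r$–norm, producing the Chevet–Saphar expression, while the reverse inequality comes from the canonical normalization (set each $|\lambda_i|$ equal to the norm of the absorbed term and rescale that term to a unit vector). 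Finally, for (iv) both $q^*$ and $p^*$ equal $\infty$, so $r=1$ and, by the boundary identity, $\alpha_{1,1}^\pi(u)=\inf\{\sumim|\lambda_i|\,\sup_i\pi(\pimqi)\,\sup_i\|y_i\|\}$; absorbing $(\lambda_i)$ into the summands and invoking the crossnorm property identifies this with $\inf\{\sumim\pi(\pimqi)\|y_i\|\}=\pi(u;\xxx,Y)$, and $d_1^\pi=g_1^\pi=\pi$ follows at once because both specialize to $\alpha_{1,1}^\pi$.

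The routine part is the bookkeeping of exponents; the one point that deserves care, and which I expect to be the main obstacle, is verifying that absorbing the scalars into a \emph{difference} of decomposable tensors is legitimate and scales the weak–$\pi$ norm correctly, i.e.\ that $\lambda_i(\pimqi)$ remains a Segre difference and that $\|(\lambda_i(\pimqi))\|_{q^*}^{w,\pi}$ behaves as expected. Once this is granted, each of the two–sided inequalities reduces to a direct application of H\"older's inequality in one direction and to the canonical choice of $(\lambda_i)$ in the other, exactly as in the classical two–factor computation of \cite[Sec. 12.5]{defant93}.
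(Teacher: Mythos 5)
The paper itself omits the proof of this proposition (it is dismissed as ``standard techniques of tensor products''), so there is no argument of the authors to compare against. Your strategy --- specialize Definition \ref{laprestenorms}, solve for $r$, and absorb the scalar sequence into whichever factor carries a degenerate $\ell_\infty$-type norm, using H\"older in one direction and the canonical normalization in the other --- is exactly the standard route, and it does dispose of (ii), (iii) and (iv) correctly. In particular the point you isolate as the main obstacle is handled properly: $\lambda(p-q)=(\lambda p)-(\lambda q)$ remains a difference of decomposable tensors and scales $\pi$ and the weak norms homogeneously, and your boundary identity $\|(\pimqi)_i\|_\infty^{w,\pi}=\sup_i\pi(\pimqi)$ is the correct way to turn the degenerate weak norms into $\pi$-quantities.

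There is, however, a genuine gap in (i): your computation, carried out as you describe it, does not produce the displayed formula, and the sentence ``producing the Chevet--Saphar expression'' asserts a match without checking the exponents. For $\alpha_{1,p}^\pi$ the definition specializes to $\inf\{\|(\lambda_i)_i\|_{p}\,\|(\pimqi)_i\|_{p^*}^{w,\pi}\,\sup_i\|y_i\|\}$ (here $q^*=p^*$ sits on the Segre differences, the index on the $y$-factor is $1^*=\infty$, and $r=p$), so absorbing $(\lambda_i)$ into the $y_i$'s --- the factor carrying the supremum, as you prescribe --- yields
$$\alpha_{1,p}^\pi(u)=\inf\left\{\|(\pimqi)_{i=1}^m\|_{p^*}^{w,\pi}\,\Big(\sumim\|y_i\|^{p}\Big)^{\frac{1}{p}}\;\Big|\;u=\sumim(\pimqi)\tens y_i\right\},$$
that is, weak $p^*$ on the differences and strong $\ell_p$ on the $y_i$'s. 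The displayed item (i) has these two exponents interchanged; what it literally describes is $\alpha_{1,p^*}^\pi$, not $\alpha_{1,p}^\pi$. This is almost certainly a typo in the statement rather than a flaw in your method: the displayed version of (i) is incompatible with item (iv) at $p=1$ (it would give $\inf\{\|(\pimqi)_i\|_1^{w,\pi}\sup_i\|y_i\|\}$, which is strictly smaller than $\pi(u)$ in general), and also with Proposition \ref{indexes}(i), since Theorem \ref{tensorialrepresentation} forces $(\xtxty,\alpha_{1,p}^\pi)^*=\Pi_{p^*}^{Lip}$, which is the dual of the corrected expression and not of the displayed one. A complete proof must either derive the corrected formula or explicitly flag the discrepancy; as written, your argument papers over the one case where the bookkeeping of exponents actually fails.
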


As it is expected, the topological duals of the normed spaces obtained from the previous proposition provide generalizations of classes of linear operators, namely, $p$-summing, $p$-dominated, bounded and operators whose dual is $p^*$-summing. According to Theorem \ref{tensorialrepresentation} we obtain the next proposition.

\begin{proposition}\label{indexes}
Let $\xxx$ and $Y$ be Banach spaces. Then
\begin{itemize}
     \item [i)]   $(\xtxty,d_p^\pi)^*=\mathcal{D}_{p^*,\infty}^\pi(\xxx;Y^*)=\Pi_{p^*}^{Lip}(\xxx;Y^*)$.
     \item [ii)]  $(\xtxty,w_p^\pi)^*=\mathcal{D}_{p,p^*}^\pi(\xxx;Y^*)$
     \item [iii)] $(\xtxty,g_p^\pi)^*=\mathcal{D}_{\infty,p^*}^\pi(\xxx;Y^*)$.
     \item [iv)] $(\xtxty, \pi)^*=\mathcal{D}_{\infty,\infty}(\xxx;Y^*)=\mathcal{L}(\xxx;Y^*)$
\end{itemize}
\end{proposition}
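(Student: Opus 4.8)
The plan is to read off all four identities from the master duality of Theorem \ref{tensorialrepresentation}, specialized to $\beta=\pi$, after rewriting each Chevet--Saphar norm in the canonical Laprest\'e form. Recall that Theorem \ref{tensorialrepresentation} gives, for $1\le p,q\le\infty$ with $\frac1p+\frac1q\le1$, the isometric identification
$$(\xtxty,\alpha_{q^*,p^*}^\pi)^*=\mathcal{D}_{p,q}^\pi(\xxx;Y^*).$$
So to compute the dual of a given norm $\alpha_{a,b}^\pi$ I would set $a=q^*$ and $b=p^*$, i.e.\ take $q=a^*$ and $p=b^*$; the dual is then $\mathcal{D}_{b^*,a^*}^\pi(\xxx;Y^*)$, provided the admissibility condition $\frac{1}{b^*}+\frac{1}{a^*}\le1$ holds (equivalently $\frac1a+\frac1b\ge1$, which is exactly the hypothesis making the Laprest\'e norm $\alpha_{a,b}^\pi$ well defined).

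First I would dispatch (i)--(iii) by this bookkeeping, using the identifications $d_p^\pi=\alpha_{1,p}^\pi$, $w_p^\pi=\alpha_{p,p^*}^\pi$ and $g_p^\pi=\alpha_{p,1}^\pi$ recorded in the preceding proposition. For (i), $(a,b)=(1,p)$ yields $\mathcal{D}_{p^*,\infty}^\pi(\xxx;Y^*)$; for (ii), $(a,b)=(p,p^*)$ yields $\mathcal{D}_{p,p^*}^\pi(\xxx;Y^*)$; for (iii), $(a,b)=(p,1)$ yields $\mathcal{D}_{\infty,p^*}^\pi(\xxx;Y^*)$. In each case the admissibility inequality $\frac1p+\frac1q\le1$ for the resulting index pair is immediate (it reads $\frac{1}{p^*}\le1$, $\frac1p+\frac1{p^*}=1$, and $\frac1{p^*}\le1$ respectively). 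The second equality in (i), $\mathcal{D}_{p^*,\infty}^\pi(\xxx;Y^*)=\Pi_{p^*}^{Lip}(\xxx;Y^*)$, is then exactly Corollary \ref{psumming} applied with the exponent $p^*$ in place of $p$.

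For (iv) I would apply the same recipe to $\alpha_{1,1}^\pi$, which by item (iv) of the preceding proposition is the projective norm $\pi$ on $\xtxty$; the matching $(a,b)=(1,1)$ produces $\mathcal{D}_{\infty,\infty}^\pi(\xxx;Y^*)$, and it remains only to verify the degenerate identity $\mathcal{D}_{\infty,\infty}^\pi(\xxx;Y^*)=\mathcal{L}(\xxx;Y^*)$ with $D_{\infty,\infty}^\pi=\|\cdot\|$. Here the domination index is $r^*=\infty$ (from $1=\frac1r$, whence $r=1$), and both weak norms degenerate into suprema, so the inequality of Definition \ref{dominatedoperators} collapses, after taking $m=1$, to $|\lev y^*,f_T(a)-f_T(b)\rev|\le C\,\pi(a-b)\,\|y^*\|$ for decomposable $a,b$. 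This is precisely the Lipschitz estimate $\|f_T(a)-f_T(b)\|\le\|T\|\,\pi(a-b)$ encoded in $\|T\|=Lip^\pi(f_T)$ of \eqref{betalip}, giving $\mathcal{L}(\xxx;Y^*)\subset\mathcal{D}_{\infty,\infty}^\pi(\xxx;Y^*)$ with $D_{\infty,\infty}^\pi(T)\le\|T\|$; the reverse inequality $\|T\|\le D_{\infty,\infty}^\pi(T)$ is already contained in the proof of Proposition \ref{complete}. The only step with content beyond index bookkeeping is this last degenerate case, and even there the work reduces to recognizing $(\infty,\infty)$-domination as the Lipschitz continuity of the associated $\so$; I do not anticipate a genuine obstacle, since Theorem \ref{tensorialrepresentation} applies verbatim once the admissibility conditions are checked.
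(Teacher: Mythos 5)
Your proposal is correct and follows the same route as the paper, which simply derives all four items from Theorem \ref{tensorialrepresentation} combined with the identifications $d_p^\pi=\alpha_{1,p}^\pi$, $w_p^\pi=\alpha_{p,p^*}^\pi$, $g_p^\pi=\alpha_{p,1}^\pi$, $\alpha_{1,1}^\pi=\pi$ and Corollary \ref{psumming}; your index bookkeeping $(a,b)\mapsto\mathcal{D}_{b^*,a^*}^\pi$ and the admissibility checks are all accurate. Your explicit treatment of the degenerate case $\mathcal{D}_{\infty,\infty}^\pi=\mathcal{L}$ via $\|T\|=Lip^\pi(f_T)$ is a detail the paper leaves implicit, and it is handled correctly.
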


Item (i) in previous proposition was proved for the first time in the doctoral dissertation of J. Angulo \cite[Theorem 2.26]{angulo10}. An interesting case occurs in (iv) in the version of the norm $\beta$. In this case,
\begin{small}
$$\alpha_{1,1}^\beta(u)=\pi^\beta(u):=\inf\left\{ \sumim \beta(\pimqi) \| \|y_i\| \Big|  u=\sumim (\pimqi)\tens y_i  \right\}$$
\end{small}
for all $u$ in $\xtxty$. Even more $(\xtxty,\pi^\beta)^*$ is linearly isometric to $\mathcal{L}^\beta(\xxx;Y^*)$, the Banach space of all bounded multilinear operators with the norm $\Lb$ (see \eqref{betalip}).

\subsection*{Remarks on the Local Behavior}

In some particular cases it is not necessary to consider the restriction of the reasonable crossnorm $\beta$.

\begin{corollary}
Let $\xxx$ and $Y$ be Banach spaces and $\beta$ be and injective tensor norm. Then
\begin{itemize}
\item  [i)]  $D_{p,q}^\beta(T)=\sup \{D_{p,q}^\beta(Q_L f_T I_{\eee}) | E_i\in \mathcal{F}(X_i), F\in \mathcal{CF}(Y)\}$.
\item [ii)]  $\apqb(u; \xxx,Y)=\inf\{ \apq^{\beta}(u; \eee,F) | E_i\in \mathcal{F}(X_i), F\in \mathcal{F}(Y)\}$.
\end{itemize}
\end{corollary}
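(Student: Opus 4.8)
The plan is to read both equalities directly off Theorem~\ref{maximal} and Proposition~\ref{finitelygenerated}, whose right-hand sides are phrased in terms of the restricted crossnorm $\br$ on $\ete$. The entire content of the corollary is that, when $\beta$ is injective, this restriction may be replaced by the intrinsic norm $\beta$ on $\ete$. Thus the argument splits into one metric lemma followed by two immediate substitutions.

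First I would record the key fact: if $\beta$ is injective and $E_i\subseteq X_i$ are subspaces, then the canonical inclusion $(\ete,\beta)\hookrightarrow(\xtx,\beta)$ is an isometry, i.e.\ $\br(w)=\beta(w;\eee)$ for every $w\in\ete$. For the injective tensor norm this is exactly the Hahn--Banach extension argument already underlying \eqref{key}; for a general injective norm it is precisely the defining metric-injection property. From this identification the spaces of $\beta$-bounded and of $\br$-bounded multilinear forms on $\epe$ coincide isometrically, so $\|\cdot\|^{w,\beta}$ computed intrinsically on $\ete$ equals $\|\cdot\|^{w,\br}$, which by \eqref{key} is in turn the weak $\beta$-norm inherited from $\xtx$. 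Feeding this into Definition~\ref{betadominatedoperators} and Definition~\ref{betalaprestenorms} gives
$$D_{p,q}^\beta(Q_Lf_TI_{\eee})=D_{p,q}^{\br}(Q_Lf_TI_{\eee}),\qquad \apq^\beta(u;\eee,F)=\apq^{\br}(u;\eee,F),$$
since the only $\beta$-sensitive ingredient in each is the weak $\beta$-norm over $\ete$.

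With the metric lemma in place, both parts are a rewriting. For (i) I would apply Theorem~\ref{maximal}, which states $D_{p,q}^\beta(T)=\sup D_{p,q}^{\br}(Q_Lf_TI_{\eee})$ over all finite dimensional $E_i\subseteq X_i$ and finite codimensional $L\subseteq Y$; replacing each $\br$-norm by the corresponding $\beta$-norm (with $L=F\in\mathcal{CF}(Y)$) produces the asserted supremum over $E_i\in\mathcal{F}(X_i)$. For (ii) I would apply Proposition~\ref{finitelygenerated}, which gives $\apqb(u;\xxx,Y)=\inf \apq^{\br}(u;\eee,F)$ over finite dimensional $E_i$ and $F\subseteq Y$ with $u\in\etetf$; the same substitution turns this into the infimum of $\apq^\beta(u;\eee,F)$ over $E_i\in\mathcal{F}(X_i)$ and $F\in\mathcal{F}(Y)$. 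The boundary values ($p=\infty$ in (i), $q=1$ in (ii)) are uniform, since the replaced ingredient is again an intrinsic-versus-restricted $\beta$-quantity.

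The main obstacle is the metric lemma itself: everything hinges on verifying that injectivity forces $\br$ to agree with the intrinsic $\beta$ on every finite dimensional $\ete$. Once that isometric identification is secured, the corollary is a direct transcription of the two previously proved results, with no further estimates required.
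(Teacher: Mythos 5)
Your proposal is correct and follows the same route the paper intends: the paper states this corollary without proof as an immediate consequence of Theorem \ref{maximal} and Proposition \ref{finitelygenerated}, the whole point being exactly your ``metric lemma'' that for an injective tensor norm the restriction $\br$ coincides isometrically with the intrinsic $\beta$ on $\ete$, so the $\br$-quantities in those two results may be replaced by $\beta$-quantities. Your observation that $D_{p,q}$ and $\apq$ depend on the crossnorm only through the weak $\beta$-norm over $\Sigma_{\eee}$ is precisely the justification the paper leaves implicit.
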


The previous corollary is also true if we replace the Banach spaces $X_i$ by Hilbert spaces $H_i$ and $\beta$ by the reasonable crossnorm $\|\cdot\|_2$ (the norm which makes $H_1\tens\dots\tens H_n$ a Hilbert space, see \cite[Section 2.6]{kadison83} for details).

According to \cite[Theorem 7.29]{ryan02} the biggest injective norm $/\pi\backslash$ is equivalent to $\omega_2$ (see \cite[Section 7.4]{ryan02}) (in the case n=2). As a consequence we have the following

\begin{corollary}
Let $\xxx$ and $Y$ be Banach spaces. Then
$$\mathcal{D}_{p,q}^{\omega_2}(X_1,X_2;Y)\cong \mathcal{D}_{p,q}^{/\pi\backslash}(X_1,X_2;Y)$$
holds isomorphic and their Banach-Mazur distance is less than the Grothendieck's constant $K_G$. In particular,
$$D_{p,q}^{\omega_2}(T)\leq D_{p,q}^{/\pi\backslash}(T)\leq K_GD_{p,q}^{\omega_2}(T)$$
for all $T$ in $\mathcal{D}_{p,q}^{\omega_2}(X_1,X_2;Y)$.
\end{corollary}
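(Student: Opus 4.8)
The plan is to read this off as a direct specialization of the crossnorm-comparison remark following Proposition~\ref{ideal} to the pair $(\omega_2, /\pi\backslash)$, with the quantitative equivalence supplied by Grothendieck's inequality as recorded in \cite[Theorem 7.29]{ryan02}. There is no new analytic content to generate; the work is in invoking the right result twice, with the orientations tracked carefully.

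First I would verify the hypotheses. Both $\omega_2$ and $/\pi\backslash$ are injective tensor norms on $X_1\tens X_2$, hence reasonable crossnorms (they satisfy $\varepsilon\leq\cdot\leq\pi$), so the comparison remark applies to them. Next I would record the two-sided estimate that encodes their equivalence: since $/\pi\backslash$ is the largest injective norm we have $\omega_2\leq /\pi\backslash$, while \cite[Theorem 7.29]{ryan02} (see also \cite[Section 7.4]{ryan02}) gives $/\pi\backslash\leq K_G\,\omega_2$; together
$$\omega_2\leq /\pi\backslash\leq K_G\,\omega_2.$$

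The core step is then to apply the comparison remark twice, in opposite directions. Taking $\beta=\omega_2$, $\theta=/\pi\backslash$ and $\lambda=1$ (legitimate because $\omega_2\leq /\pi\backslash$) gives that $I:\mathcal{D}_{p,q}^{/\pi\backslash}(X_1,X_2;Y)\into\mathcal{D}_{p,q}^{\omega_2}(X_1,X_2;Y)$ is bounded with norm at most $1$, i.e. $D_{p,q}^{\omega_2}(T)\leq D_{p,q}^{/\pi\backslash}(T)$. Taking instead $\beta=/\pi\backslash$, $\theta=\omega_2$ and $\lambda=K_G$ (legitimate because $/\pi\backslash\leq K_G\,\omega_2$) gives that $I:\mathcal{D}_{p,q}^{\omega_2}(X_1,X_2;Y)\into\mathcal{D}_{p,q}^{/\pi\backslash}(X_1,X_2;Y)$ is bounded with norm at most $K_G$, i.e. $D_{p,q}^{/\pi\backslash}(T)\leq K_G\,D_{p,q}^{\omega_2}(T)$. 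The two mutual inclusions force the two spaces to coincide as sets and the identity maps to be mutually inverse, so they are isomorphic; combining the two norm estimates yields the displayed chain $D_{p,q}^{\omega_2}(T)\leq D_{p,q}^{/\pi\backslash}(T)\leq K_G\,D_{p,q}^{\omega_2}(T)$, whence the Banach--Mazur distance is at most $K_G$.

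I do not expect a genuine obstacle: the only deep input is Grothendieck's inequality, imported wholesale from \cite{ryan02}, and everything else is bookkeeping against the comparison remark. The single point requiring care is correctly matching the roles of $\beta$ and $\theta$ (and the corresponding $\lambda$) in each of the two applications, so that the inequalities emerge in exactly the orientation stated in the corollary rather than the reverse one.
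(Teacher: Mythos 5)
Your proposal is exactly the paper's intended argument: the corollary is presented there as an immediate consequence of the crossnorm-comparison remark following Proposition~\ref{ideal} together with the equivalence $\omega_2\leq/\pi\backslash\leq K_G\,\omega_2$ imported from \cite[Theorem 7.29]{ryan02}, and you apply that remark twice in the two orientations precisely as the paper does, obtaining the stated chain of inequalities and the Banach--Mazur bound. There is no gap relative to the paper's (essentially one-line) proof.
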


\subsection*{Proposal for $(p,q)$-dominated polynomials}

A version of $(p,q)$-domination for polynomials can also be proposed. A similar procedure was applied in \cite[Section 5]{fernandez-unzueta18b} to obtain the notion of polynomials factoring through Hilbert spaces (under the perspective of $\Sigma$-operators). Other versions of dominated polynomials can be found in \cite{botelho04, botelho06, cilia05, melendez99, rueda14}. Recall that a  mapping $p:X\rightarrow Y$ between Banach spaces is a {\sl homogeneous polynomial of degree $n$}
if there exists a multilinear mapping $T_p: X\times\ldots\times X \rightarrow Y$ such that $p(x)=T_p(x,\stackrel{n}{\ldots},x)$.

If we denote by $\pi_{n,s}$ the symmetric projective tensor norm on the symmetric tensor product $\tens^{n,s} X$ and $\tens^n x:=x\tens\dots\tens x$ we propose the next definition.

\begin{definition}\label{polynomial}
Let $1\leq p,q\leq \infty$ such that $\frac{1}{p}+\frac{1}{q}\leq 1$. Take the unique $r\in[1,\infty]$ such that $1=\frac{1}{r}+\frac{1}{p}+\frac{1}{q}$. The $n$-homogeneous polynomial $p:X\into Y$ is named $\pi_{n,s}$-$(p,q)$-dominated if there exists a constant $C>0$ such that
\begin{equation*}
 \|(\lev y_i^*  ,  p(x_i)-p(z_i)  \rev)_{i=1}^m\|_{r^*}\leq C\|(\tens^n x_i-\tens^n z_i)_{i=1}^m\|_p^{w,\pi_{n,s}}  \|(y_i^*)_{i=1}^m\|_q^w
\end{equation*}
holds for all finite sequences $(x_i)_{i=1}^m$, $(z_i)_{i=1}^m$ in $X$ and $(y_i^*)_{i=1}^m$ in $Y^*$. Define $D_{p,q}^\pi(p)$ as the infimum of all the constants $C$ as above.
\end{definition}

It is not difficult to prove that every multilinear operator $T$ in $\mathcal{D}_{p,q}^\pi(X,\dots, X; Y)$ defines an $\pi_{n,s}$-$(p,q)$-dominated n-homogeneous polynomial $p:X\into Y$ and that $D_{p,q}^{\pi_{n,s}}(p)\leq \frac{n^n}{n!} D_{p,q}^\pi(T)$. Also,  a composition $SpR$ is $\pi_{n,s}$-$(p,q)$-dominated if $p$ is and $R$ and $S$ are bounded linear operators; moreover, $D_{p,q}^{\pi_{n,s}}(RpS)\leq \|R\|D_{p,q}^{\pi_{n,s}}(p)\|S\|^n$.
\\
\\

{\bf Acknowledgments.} The first author was partially supported by CONACYT grant 284110. The second author was partially supported by CONACYT scholarship 36073.


\bibliographystyle{amsplain}

\end{document}